\newcommand{\nc}{\newcommand}
\newcommand{\rc}{\renewcommand}
\nc\on{\operatorname}
\nc\ol{\overline}
\nc{\ul}{\underline}
\nc{\wt}{\widetilde}
\nc{\us}{\underset}
\nc{\os}{\overset}
\nc{\Spec}{\on{Spec}}
\nc\Id{\on{Id}}
\nc{\Hom}{\on{Hom}}
\nc{\res}{\on{res}}
\nc{\Cone}{\on{Cone}}
\nc{\Conv}{\on{Conv}}
\nc{\tcap}{\bar\pitchfork}
\nc{\del}{\nabla}
\renewcommand{\l}{\left}
\renewcommand{\r}{\right}
\nc{\la}{\langle}
\nc{\ra}{\rangle}
\nc{\rt}{\sqrt}
\nc{\injto}{\hookrightarrow}
\nc{\surjto}{\twoheadrightarrow}
\nc\mapsfrom{\mathrel{\reflectbox{\ensuremath{\mapsto}}}}
\nc\from{\mathrel{\reflectbox{\ensuremath{\to}}}}
\nc{\bs}{\bigskip}
\nc{\ms}{\medskip}
\nc{\noi}{\noindent}
\nc{\tn}{\textnormal}
\nc{\tb}{\textbf}
\nc{\mb}{\mathbf}
\nc{\dsp}{\displaystyle}
\nc{\tc}{\textcolor}
\rc{\AA}{\mathbb{A}}
\nc{\CC}{\mathbb{C}}
\nc{\FF}{\mathbb{F}}
\nc{\NN}{\mathbb{N}}
\nc{\PP}{\mathbb{P}}
\nc{\QQ}{\mathbb{Q}}
\nc{\RR}{\mathbb{R}}
\nc{\ZZ}{\mathbb{Z}}
\nc{\kk}{\mathbbm{k}}
\nc{\CL}{\mathcal{L}}
\nc{\CA}{\mathcal{A}}
\nc{\SA}{\mathscr{A}}
\nc{\SC}{\mathscr{C}}
\nc{\SF}{\mathscr{F}}
\nc{\SO}{\mathscr{O}}
\nc{\SG}{\mathscr{G}}
\nc{\SB}{\mathscr{B}}
\nc{\SN}{\mathscr{N}}
\nc{\SI}{\mathscr{I}}
\nc{\SL}{\mathscr{L}}
\nc{\al}{\alpha}
\nc{\be}{\beta}
\nc{\ga}{\gamma}
\nc{\lam}{\lambda}
\nc{\ep}{\epsilon}
\nc{\de}{\delta}
\nc{\De}{\Delta}
\nc{\ka}{\kappa}
\nc{\si}{\sigma}
\rc{\th}{\theta}
\nc{\vphi}{\varphi}
\nc{\pp}{\mathfrak{p}}
\nc{\mm}{\mathfrak{m}}
\nc{\vb}[1]{\vec{\bold{#1}}}
\theoremstyle{definition}
\newtheorem{cor}[subsection]{Corollary}
\newtheorem{prop}[subsection]{Proposition}
\newtheorem{thm}[subsection]{Theorem}
\newtheorem{defn}[subsection]{Definition}
\newtheorem{eg}[subsection]{Example}
\newtheorem{rem}[subsection]{Remark}
\newtheorem{question}{Question}
\title{Complete Intersections with given Hilbert Polynomials}
\author{Christopher Eur, Lim Sung Hyun}
\begin{document}

\maketitle

\begin{abstract}
The Hilbert polynomial of a homogeneous complete intersection is determined by the degrees of the generators of the defining ideal.  The degrees of the generators are not, in general, determined by the Hilbert polynomial -- but sometimes they are. When?  We give some general criteria and completely answer the question up to codimension 6.
\end{abstract}

\section{Introduction}

Let $X = X(a_1, \ldots, a_c)\subset \PP^n_\CC$ be a complete intersection of codimension $c$ in $\PP^n_\CC$ defined by homogeneous polynomials $(f_1, \ldots, f_c)$ of degrees $(a_1, \ldots, a_c)= (\deg f_1, \ldots, \deg f_c)$.  Can one recover up to symmetry the degree sequence $(a_1, \ldots, a_c)$ given only the Hilbert polynomial $P_X(t)$ of $X$?

\ms
The Hilbert polynomial $P_X(t) = \mu_0 t^{n-c} + \mu_1t^{n-c-1} + \cdots + \mu_{n-c}$ has $n-c+1$ coefficients,  so if the codimension $c$ is large relative to the ambient dimension $n$,  then one expects the answer to be no.  For example, consider codimension-2 complete intersections in the plane; $X(2,2)$, $X(1,4) \subset \PP^2$ both have Hilbert polynomial $P_X(t) = 4$.  However, for codimension-2 complete intersections in $\PP^3$, the degree sequence is always recoverable:  For $X = X(a_1, a_2) \subset \PP^3$ a complete intersection, one can recover $(a_1, a_2)$ from the degree $\deg X = a_1a_2$ and the arithmetic genus $p_a = \frac{1}{2}a_1a_2(a_1+a_2-4)+1$ and hence from the Hilbert polynomial $P_X(t) = (\deg X)t + (1-p_a)$.

\ms
We explore this behavior in general; for what values of codimension $c$ and ambient dimension $n$ do the Hilbert polynomial always determine the degree sequence of the complete intersection?

\ms
We say that a pair $(c,n)$ is \tb{firm} if the degree sequence of any codimension $c$ complete intersection in $\PP^n$ is determined by its Hilbert polynomial.  We will see in Proposition \ref{nplus1} that $(c,n)$ firm implies $(c, n+1)$ firm, so we are interested in 
$$N_c := \min\{n : (c,n) \tn{ firm}\} \quad(N_c:=\infty \tn{ if no such $n$ exists for $c$}).$$
Determining $N_c$ in general seems difficult.  In fact, we do not know whether $N_c$ is finite for all $c$.

\begin{question} Is $N_c$ finite for all $c$? \end{question}

Since $X(a_1, \ldots, a_c)\subset \PP^n$ has the same Hilbert polynomial as $X(1, a_1, \ldots, a_c)\subset \PP^{n+1}$, once we have $N_c = \infty$ for some $c$ we have $N_{c'}= \infty$ for any $c'\geq c$ also.  The difficulty in Question 1 stems from the degrees being unbounded; if the degrees in the degree sequence are bounded, then for sufficiently large $n$ the Hilbert polynomial determines the degree sequence:

\newtheorem*{cor:regdim}{Corollary \ref{regdim}}
\begin{cor:regdim} For a complete intersection $X(a_1, \ldots, a_c)\subset \PP^n$ such that $\on{reg} X \leq \dim X$ (equivalently $\sum_{i=1}^c a_i \leq n$), the Hilbert polynomial recovers the degree sequence.
\end{cor:regdim}

Our main result concerns $N_c$ for $c\leq 6$, proven in section \S4:

\newtheorem*{thm:Nc}{Theorem \ref{Nc}}
\begin{thm:Nc}
We have $N_1 = 1$, $N_2 = 3$, $N_3 = 5$, $N_4 = 8$, $N_5 = 11$, $N_6 = 14$.
\end{thm:Nc}

Moreover, we have the following parity restriction on $N_c$:

\newtheorem*{cor:evenodd2}{Corollary \ref{evenodd2}}
\begin{cor:evenodd2}
$N_c \equiv c \mod 2$ for $c>2$ and $N_c$ finite. 
\end{cor:evenodd2}

\noi which follows from a rigidity among the coefficients of Hilbert polynomials of a certain class of projective varieties which complete intersections are part of: 

\newtheorem*{thm:evenodd}{Theorem \ref{evenodd}}
\begin{thm:evenodd}
Let $X\subset \PP^n$ be a $d$-dimensional smooth projective variety whose total Chern class of the normal bundle $c(\mathscr N_{X/\PP^n})$ is a polynomial in $c_1(\SO_X(1))$ with $\ZZ$-coefficients.  Let $P_X(t) = \mu_0t^{d} + \mu_1t^{d-1} + \cdots \mu_{d-1}t + \mu_{d}$ be its Hilbert polynomial.  Then $\{\mu_{2i}\}_{2i\leq d}\cup \{\mu_1\}$ determine $P_X(t)$.
\end{thm:evenodd}

The other side of asking whether $N_c$ is finite is giving lower bounds for $N_c$.  Clearly for $c>1$ the condition $n>c$ is necessary for firmness.  Also, $P_X(t)$ has $n-c+1$ coefficients, but for $n>c$ Theorem \ref{evenodd} implies that  $P_X(t)$ has $\lceil \frac{n-c+1}{2}\rceil + 1$ ``irredundant" coefficients.  Thus, one may guess that $\lceil \frac{n-c+1}{2}\rceil + 1\geq c$ is necessary for $(c,n)$ to be firm, but since firmness is a matter of existence of positive solutions to a system of Diophantine equations, this is not clear.

For example, the two sequences $(46, 36, 32, 15, 12, 5), (45, 40, 24, 23, 8, 6)$ in Example \ref{c6eg} give the smallest example for non-firmness of (6,13) in the sense that the sum of the degrees is the minimum possible.  Note that there are ${51 \choose 6} =  18009460$ sequences of positive integers of length 6 (up to symmetry) with each entry $\leq 46$.

\begin{question} Suppose $c>1$ and $n>c$.  If $(c,n)$ is firm, then is $\dsp \left\lceil \frac{n-c+1}{2}\right\rceil + 1\geq c$? \end{question}

Theorem \ref{Nc} shows that $\left\lceil \frac{n-c+1}{2}\right\rceil + 1\geq c$ is necessary and sufficient condition for $(c,n)$ to be firm for $1<c\leq 6$ (given $n>c$).

\subsection{Structure of the paper}
In section \S2 we note that in the case of bounded degrees, $(c,n)$ is firm for large enough $n$.  In section \S3 we define quantities $\Lambda_i^c$'s which encode the same information as the Hilbert polynomial for complete intersections; these quantities have the advantage that they are independent of the ambient dimension $n$ and are simpler to compute.  In section \S4 we use these $\Lambda_i^c$'s to determine $N_c$ for $c\leq 6$.

\proof[Note on the ground field]  Throughout this paper we may assume the ground field to be any algebraically closed field of characteristic zero.  The Hilbert polynomial of a complete intersection is entirely determined the degree sequence regardless of the ground field, and for any field $k$ and degrees $(a_1, \ldots, a_c)$ there is at least one complete intersection in $\PP^n_k$ with the prescribed degrees, namely one given by the ideal $\langle x_0^{a_1}, x_1^{a_2}, \ldots, x_{c-1}^{a_c} \rangle \subset k[x_0, \ldots, x_n]$.

\section{The case of bounded degrees}

We start by noting that the Hilbert function, as opposed to the Hilbert polynomial, always recovers the degree sequence.

\begin{prop} Let $X(a_1, \ldots, a_c)\subset \PP^n$ be a complete intersection.  The Hilbert function of $X$ determines the degree sequence.
\end{prop}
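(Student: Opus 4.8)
The plan is to compute the Hilbert series of $X$ explicitly, observe that it is determined by the Hilbert function, and then recover the multiset $\{a_1, \ldots, a_c\}$ from this rational function.

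First, let $S = \CC[x_0, \ldots, x_n]$ and let $I = (f_1, \ldots, f_c)$ be the defining ideal, so that the homogeneous coordinate ring of $X$ is $S/I$ and its Hilbert function is $m \mapsto \dim_\CC (S/I)_m$. Since $X$ has codimension $c$ and $S$ is Cohen--Macaulay, the sequence $f_1, \ldots, f_c$ is regular; hence the Koszul complex on $f_1, \ldots, f_c$ is a graded free resolution of $S/I$, and the Hilbert series is
$$ H_X(t) := \sum_{m \geq 0} \dim_\CC (S/I)_m \, t^m = \frac{\prod_{i=1}^c (1 - t^{a_i})}{(1-t)^{n+1}}. $$
Next, the Hilbert function is by definition the coefficient sequence of the power series $H_X(t)$, so it determines $H_X(t)$ as a rational function. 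Because the pair $(c,n)$ --- and in particular the ambient dimension $n$ --- is part of the given data, we may clear denominators and recover the numerator polynomial
$$ N(t) := H_X(t)\,(1-t)^{n+1} = \prod_{i=1}^c (1 - t^{a_i}). $$

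The main point is to recover the multiset $\{a_1, \ldots, a_c\}$ from $N(t)$. Using the cyclotomic factorization $1 - t^{a} = -\prod_{d \mid a}\Phi_d(t)$, where $\Phi_d$ is the $d$-th cyclotomic polynomial, unique factorization of polynomials shows that the multiplicity of $\Phi_d(t)$ as an irreducible factor of $N(t)$ equals $m_d := \#\{i : d \mid a_i\}$. Writing $n_e := \#\{i : a_i = e\}$ for the multiplicity of the value $e$ in the multiset, we have $m_d = \sum_{d \mid e} n_e$, a finite sum since $n_e = 0$ for $e > \max_i a_i$. Möbius inversion then gives $n_d = \sum_{d \mid e} \mu(e/d)\, m_e$, so the multiplicities $n_d$, and hence the degree sequence up to permutation, are determined.

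I expect the only genuine content to be this last step --- the injectivity of the assignment $\{a_i\} \mapsto \prod_i (1 - t^{a_i})$ --- which the cyclotomic factorization together with Möbius inversion settles cleanly; the rest is the standard Koszul computation. It is worth emphasizing where the hypothesis is used: the Hilbert function alone does not determine the degrees if $n$ is allowed to vary, since $X(a_1, \ldots, a_c)\subset \PP^n$ and $X(1, a_1, \ldots, a_c)\subset \PP^{n+1}$ share the same Hilbert series; fixing $n$ (equivalently $c$, since $\dim X = n - c$ can be read off from the degree of the Hilbert polynomial) is precisely what removes this ambiguity.
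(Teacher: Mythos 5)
Your proof is correct, and its first half coincides with the paper's: both reduce, via the Koszul resolution, to the fact that the Hilbert series is $\prod_{i=1}^c(1-t^{a_i})/(1-t)^{n+1}$, so that the problem becomes recovering the multiset $\{a_1,\ldots,a_c\}$ from the numerator $N(t)=\prod_{i=1}^c(1-t^{a_i})$. Where you genuinely diverge is the recovery step. The paper argues greedily: assuming $a_1\le\cdots\le a_c$, the smallest positive exponent appearing in $N(t)$ with nonzero coefficient is $a_1$ (products of two or more factors only contribute in degrees $\ge 2a_1$), so one reads off $a_1$, divides $N(t)$ by $(1-t^{a_1})$, and iterates. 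You instead invoke unique factorization into cyclotomic polynomials and M\"obius inversion over multiples (valid here since all sums are finite), obtaining the closed formula $n_d=\sum_{d\mid e}\mu(e/d)\,m_e$ for the multiplicity of each degree. The paper's route is more elementary and requires no number theory; yours buys an explicit one-shot formula for all multiplicities rather than a sequential peeling procedure, and it isolates exactly which invariant of $N(t)$ (the cyclotomic multiplicities $m_d$) carries the information. Your closing remark---that fixing $n$ (equivalently $c$) is what removes the ambiguity between $X(a_1,\ldots,a_c)\subset\PP^n$ and $X(1,a_1,\ldots,a_c)\subset\PP^{n+1}$---is accurate and worth making explicit; the paper uses this implicitly when it writes the series ``with denominator $(1-t)^{n+1}$,'' and mentions the same ambiguity only in its introduction.
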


\begin{proof}
Hilbert function determines the Hilbert series $H_X(t)$ of $X$, which for complete intersection $X(a_1, \ldots, a_c)$ is
$$H_X(t) = \frac{(1-t^{a_1})(1-t^{a_2})\cdots(1-t^{a_c})}{(1-t)^{n+1}}.$$
Without loss of generality, assume $a_1 \leq a_2 \leq \cdots\leq a_c$.  From the numerator of the Hilbert series of $X$ written with denominator $(1-t)^{n+1}$, the smallest nonzero degree of $t$ that appears  is $a_1$.  Divide the numerator then by $(1-t^{a_1})$, and continue the process.
\end{proof}

\begin{cor} \label{regdim} For complete intersection $X(a_1, \ldots, a_c)\subset \PP^n$ such that $\on{reg} X \leq \dim X$ (equivalently $\sum_i a_i \leq n$), the Hilbert polynomial recovers the degree sequence.
\end{cor}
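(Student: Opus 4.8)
The plan is to reduce the statement to the preceding Proposition by showing that, under the hypothesis, the Hilbert polynomial agrees with the Hilbert function in \emph{every} nonnegative degree; then $P_X$ records exactly the data of the full Hilbert function, from which the Proposition extracts the degree sequence. First I would check the stated equivalence of hypotheses. The Koszul resolution of $S/I_X$ places $\bigoplus_{|J|=p}S(-\sum_{j\in J}a_j)$ in homological degree $p$, and since each $a_i\ge 1$ the difference between the largest twist in degree $p$ and $p$ itself is non-decreasing in $p$, hence maximized at $p=c$; with the convention $\on{reg}X=\on{reg}(S/I_X)$ this gives $\on{reg}X=\sum_i a_i-c$. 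As $\dim X=n-c$, the condition $\on{reg}X\le\dim X$ is literally $\sum_i a_i\le n$.

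Next I would expand the Hilbert series. Writing $H_X(t)=K(t)/(1-t)^{n+1}$ with $K(t)=\prod_{i=1}^c(1-t^{a_i})=\sum_j k_j t^j$ and $1/(1-t)^{n+1}=\sum_{m\ge 0}\binom{n+m}{n}t^m$, one reads off $h_X(d)=\sum_{j\le d}k_j\binom{n+d-j}{n}$. The Hilbert polynomial $P_X(d)$ is the \emph{same} expression summed over all $j$, with each $\binom{n+d-j}{n}$ reinterpreted as the degree-$n$ polynomial $\tfrac{1}{n!}(d-j+1)(d-j+2)\cdots(d-j+n)$ (the two agree for $d\ge\deg K$, identifying this polynomial as $P_X$). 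Subtracting, the discrepancy is $P_X(d)-h_X(d)=\sum_{j>d}k_j\binom{n+d-j}{n}$, the binomial symbols now denoting that polynomial.

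The heart of the argument, and the step I expect to require the most care, is showing this discrepancy vanishes identically for $d\ge 0$. This is exactly where the hypothesis enters: $\deg K=\sum_i a_i\le n$ forces $k_j=0$ for $j>n$, so every surviving summand has $j\le n$; combined with $d\ge 0$ and $j>d$ this pins $d-j$ into the range $\{-1,-2,\ldots,-n\}$, which is precisely the set of roots of $\binom{n+m}{n}=\tfrac{1}{n!}(m+1)\cdots(m+n)$. Hence every term vanishes and $P_X(d)=h_X(d)$ for all $d\ge 0$. Thus $P_X$ determines the entire Hilbert function, and the preceding Proposition recovers $(a_1,\ldots,a_c)$.

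The bound is sharp for this mechanism: the tightest case is $\sum_i a_i=n$ with $d=0$ and $j=n$, where the leading coefficient $k_n=(-1)^c\ne 0$ but $d-j=-n$ still meets the final root of $\binom{n+m}{n}$, so the cancellation barely survives. Once $\sum_i a_i>n$ the identity $h_X=P_X$ fails at small $d$, and with it recoverability itself, as already seen for $X(2,2)$ and $X(1,4)$ in $\PP^2$ (both with Hilbert polynomial $4$ and $\sum_i a_i=4>2$).
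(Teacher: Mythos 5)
Your proof is correct, and it follows the same overall reduction as the paper — show that under the hypothesis the Hilbert polynomial agrees with the Hilbert function in every nonnegative degree, then invoke the preceding Proposition — but it establishes the central agreement by a genuinely different, self-contained argument. The paper's proof is essentially a citation: complete intersections are arithmetically Cohen--Macaulay, so by Auslander--Buchsbaum the coordinate ring has projective dimension $c$, and then \cite[Theorem 4.2.2]{Eis05} gives $P_X(t) = h_X(t)$ for $t \geq \on{reg}X + c - n = \on{reg}X - \dim X$, which is $\leq 0$ by hypothesis. You instead reprove exactly the special case of that theorem needed here: expanding $H_X(t)=\prod_i(1-t^{a_i})/(1-t)^{n+1}$, the discrepancy $P_X(d)-h_X(d)=\sum_{j>d}k_j\binom{n+d-j}{n}$ vanishes term by term, because $\sum_i a_i\leq n$ forces every surviving index into the window $d<j\leq n$, where $d-j$ is a root of the polynomial $\frac{1}{n!}(m+1)\cdots(m+n)$. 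This costs a little more space but buys transparency: it avoids the regularity/projective-dimension machinery, it makes the sharpness of the bound visible, and it actually proves the equivalence $\on{reg}X=\sum_i a_i - c$ (via the Koszul resolution) that the paper's statement asserts parenthetically without justification. The one blemish is your closing sentence: the failure of $h_X=P_X$ when $\sum_i a_i>n$ does not by itself imply failure of recoverability (for instance $X(a_1,a_2)\subset\PP^3$ with $a_1+a_2>3$ is still recoverable, since $(2,3)$ is firm); but that remark sits outside the proof and does not affect its validity.
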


\begin{proof}
Complete intersections are arithmetically Cohen-Macaulay, so the projective dimension of the coordinate ring $S_X$ of $X$ over $S = k[x_0, \ldots, x_n]$ is equal to the codimension $c$.  If $\on{reg} X \leq \dim X $, then \cite[Theorem 4.2.2]{Eis05} implies the Hilbert polynomial $P_X(t)$ is equal to the Hilbert function for $t \geq 0 \geq \on{reg} X +c -n = \on{reg} X - \dim X$.
\end{proof}

Thus, if the degrees in the degree sequence are bounded, then for a large enough ambient dimension $n$, the Hilbert polynomial always recovers the degree sequence.

\section{Modified Todd classes}

In this section we use the Hirzebruch-Riemann-Roch theorem to encode information equivalent to the Hilbert polynomial in a way that is both computationally and theoretically useful.

\subsection{The invariants $\Lambda^X_i$}  We introduce a certain collection of numbers which we call $\Lambda_i^X$'s that contain the same information as the Hilbert polynomial of $X$ for certain kinds of projective schemes $X$.  We first set up some notations regarding the Hirzebruch-Riemann-Roch theorem.

\medskip
Let $X\subset \PP^n_k$ be a $d$-dimensional smooth projective variety over an algebraically closed field $k$.  Denote by $A(X)$ its Chow ring graded by codimension and by $c(\mathscr E)\in A(X)$ the total Chern class of a coherent sheaf $\mathscr E$ on $X$.  For $\al \in A(X)$ denote by $\al_\ell$ its $\ell$-th graded part of $\al$.  Note that the embedding $i: X \injto \PP^n$ induces $i^*: A(\PP^n) \to A(X)$ where the hyperplane class $h := c_1(\SO_{\PP^n}(1)) \in A(\PP^n)$ pulls back to $h_X := c_1(\SO_X(1))$.  
Denote by $\int_X : A(X)^{d} \to \ZZ$ the degree map sending a class of a point $[\tn{pt}]\in A^{d}(X)$ to 1, and by $\deg X := \int_X h_X^d$ the degree of $X$ in $\PP^n$.  Lastly, by $\int_X \al$ we mean $\int_X \al_d$ where $\al_d$ is the $d$-th graded part of $\al \in A(X)$.

Recall that the Chern character $\on{Ch}$ and the Todd class $\on{Td}$ are two maps from $K(X)$, the Grothendieck group of vector bundles on $X$, to $A(X)$ given by the characteristic power series $Q(x) = e^x$ and $Q(x) = \frac{x}{1-e^{-x}}$ (respectively).  Here we will view them as group homomorphisms $(1+A^{>0}(X))\to (1+A^{>0}(X))$  where $1+A^{>0}(X)$ denotes multiplicative subgroup of $A(X)^*$ consisting of elements in $A(X)$ of the form $1+\al$ with $\al_0 = 0\in A^0(X)$; this is justified by the splitting principle.

\medskip
We recall the statement of the Hirzebruch-Riemann-Roch theorem.  For a topological proof, see \cite{Hir95}; a more algebraic proof of its generalization due to Grothendieck is given in \cite{BS58}.  For an exposition of its use and examples see \cite[Chapter 14]{EH16} or \cite[Appendix A]{Har77}.

\begin{thm}[HRR] \label{HRR} Let $\mathscr E$ be a locally free sheaf of finite rank on a smooth projective variety $X$ over an algebraically closed field $k$ with tangent bundle $\mathscr T_X$.  Then
$$\chi(\mathscr E) = \int_X \on{Ch}(\mathscr E) \cdot \on{Td}(\mathscr T_X),$$
and in particular, if $X\subset \PP^n$ then its Hilbert polynomial $P_X(t)$ is
$$P_X(t) = \chi(\SO_X(t)) = \int_X \on{Ch}(\SO_X(t)) \cdot \on{Td}(\mathscr T_X).$$

\end{thm}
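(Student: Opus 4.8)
The plan is to deduce the theorem from the Grothendieck--Riemann--Roch (GRR) theorem for the structure morphism, exploiting the embedding $i\colon X\injto\PP^n$ that is already in play. Recall that GRR asserts that for a proper morphism $f\colon Y\to Z$ of smooth varieties one has $\on{Ch}(f_!\mathscr F)\cdot\on{Td}(\mathscr T_Z)=f_*\big(\on{Ch}(\mathscr F)\cdot\on{Td}(\mathscr T_Y)\big)$ in $A(Z)_\QQ$, where $f_!=\sum_i(-1)^iR^if_*$; HRR is the special case $Z=\Spec k$, for which $f_*$ is the degree map $\int_X$ and $\on{Td}(\mathscr T_Z)=1$, yielding $\chi(\mathscr E)=\int_X\on{Ch}(\mathscr E)\cdot\on{Td}(\mathscr T_X)$. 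The ``in particular'' clause is then immediate: $\chi(\SO_X(t))$ is a numerical polynomial in $t$ (additivity of $\chi$ together with the Chern-character formula) agreeing with the Hilbert function for $t\gg0$ by Serre vanishing, so it equals $P_X(t)$, and substituting $\mathscr E=\SO_X(t)$ gives the stated integral formula. (A fully topological alternative, following \cite{Hir95}, would instead reduce both sides to cobordism invariants and verify equality on products of projective spaces; I will pursue the algebraic route since it uses the embedding directly.)

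To prove GRR for $f\colon X\to\Spec k$, I would factor it as $X\xrightarrow{i}\PP^n\xrightarrow{\pi}\Spec k$ and invoke the functoriality of GRR under composition, so that it suffices to establish GRR for the two maps separately. First I would cut down the dependence on $\mathscr E$: both sides of GRR are additive over short exact sequences, hence factor through $K(X)$, and by the splitting principle it suffices to treat line bundles---or simply $\SO_X(t)$ directly, which is all the application requires.

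For the smooth projection $\pi\colon\PP^n\to\Spec k$, GRR is a direct computation. We have $A(\PP^n)=\ZZ[h]/(h^{n+1})$ with $\int_{\PP^n}$ extracting the coefficient of $h^n$, the Euler sequence gives $\on{Td}(\mathscr T_{\PP^n})=(h/(1-e^{-h}))^{n+1}$, and one checks that the coefficient of $h^n$ in $e^{th}\cdot(h/(1-e^{-h}))^{n+1}$ equals $\binom{n+t}{n}=\chi(\SO_{\PP^n}(t))$, for instance via a residue/generating-function identity.

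The crux---and the step I expect to be the main obstacle---is GRR for the closed immersion $i\colon X\injto\PP^n$, i.e. controlling the ``Chern-character defect'' $\on{Ch}(i_!\mathscr F)$ of a pushforward. The standard route is deformation to the normal cone: one deforms $i$ to the zero-section embedding into the normal bundle $\mathscr N_{X/\PP^n}$, for which $i_!\mathscr F$ is resolved by a Koszul complex $\bigwedge^\bullet\mathscr N_{X/\PP^n}^{\vee}\otimes\mathscr F$ and the defect is computed to be the inverse Todd class of the normal bundle, giving $\on{Ch}(i_!\mathscr F)=i_*\big(\on{Ch}(\mathscr F)\cdot\on{Td}(\mathscr N_{X/\PP^n})^{-1}\big)$. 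Feeding this into the normal-bundle sequence $0\to\mathscr T_X\to i^*\mathscr T_{\PP^n}\to\mathscr N_{X/\PP^n}\to0$, multiplicativity of the Todd class converts the defect into exactly the $\on{Td}(\mathscr T_X)/i^*\on{Td}(\mathscr T_{\PP^n})$ correction demanded by GRR, and the projection formula completes the identification. Verifying that the defect is independent of the chosen deformation, and carrying out the normal-cone construction carefully, is the technical heart of the argument; everything else is bookkeeping in the Chow ring. For the algebraic details I would follow \cite{BS58}.
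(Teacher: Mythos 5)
The paper does not actually prove this theorem: HRR is classical background, stated with pointers to \cite{Hir95} for the topological proof and to \cite{BS58} for Grothendieck's algebraic proof, and everything downstream (Proposition \ref{lambda}, Theorem \ref{evenodd}) simply invokes it. So your proposal should be judged as a sketch of a known deep theorem rather than against a proof in the paper, and as such it is the standard argument and correct in outline: HRR is the $Z=\Spec k$ instance of GRR; the structure map factors as $X \injto \PP^n \to \Spec k$ with GRR functorial under composition; the projection case is the explicit check that the coefficient of $h^n$ in $e^{th}\bigl(h/(1-e^{-h})\bigr)^{n+1}$ equals $\binom{n+t}{n}=\chi(\SO_{\PP^n}(t))$; and the immersion case is the defect formula $\on{Ch}(i_!\mathscr F)=i_*\bigl(\on{Ch}(\mathscr F)\cdot\on{Td}(\mathscr N_{X/\PP^n})^{-1}\bigr)$, after which the normal bundle sequence and the projection formula are bookkeeping, exactly as you say. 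Your handling of the ``in particular'' clause (Serre vanishing plus polynomiality of $t\mapsto\chi(\SO_X(t))$ identifies it with $P_X(t)$) is also right, and is the one step beyond citation that a careful reader actually needs. Two refinements. First, the composition step requires HRR on $\PP^n$ for the class $i_![\SO_X(t)]\in K(\PP^n)$, which is not a line bundle class; this is fine because the classes $[\SO_{\PP^n}(j)]$ generate $K(\PP^n)$ and both sides are additive, but that generation fact should be stated explicitly --- the splitting-principle remark you make on $X$ does not substitute for it. Second, a bibliographic point: the deformation-to-the-normal-cone treatment of the immersion case that you sketch is the modern route of Baum--Fulton--MacPherson and Fulton's \emph{Intersection Theory}; Grothendieck's original argument as exposed in \cite{BS58} handles immersions differently, reducing to the divisorial case by blowing up along the center and proving a ``key formula'' on the blow-up. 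Either reference closes the technical gap you flag, but if you intend to ``follow \cite{BS58}'' you should expect that different reduction rather than the normal-cone deformation.
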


We now modify the Hirzebruch-Riemann-Roch theorem slightly to encode information equivalent to the Hilbert polynomial in a way that is convenient for analyzing complete intersections.  We first restrict to a subclass of projective varieties satisfying the following condition:

\theoremstyle{remark}
\newtheorem{cond}{Condition}
\renewcommand{\thecond}{$(*)$}
\begin{cond}\label{star}
$X$ is a $d$-dimensional smooth projective variety over an algebraically closed field $k$ whose total Chern class of the normal bundle $c(\mathscr N_{X/\PP^n})$ satisfies $c(\mathscr N_{X/\PP^n}) \in \ZZ[h_X] = i^*(A(\PP^n))$.
\end{cond}

Note that a complete intersection $X(a_1, \ldots, a_c)\subset \PP^n_k$ certainly satisfy \ref{star} since $c(\mathscr N_{X/\PP^n}) = \prod_{i=1}^c (1+a_ih_X)$ (\cite[Example 5.19]{EH16}).

\begin{defn}
Let $X\subset \PP^n$ satisfy the condition \ref{star}.  Define
$$\Lambda_i^X := \int_X h_X^{d-i}\cdot \on{Td}\l(\frac{1}{c(\mathscr N_{X/\PP^n})}\r)_i$$
where $\mathscr N_{X/\PP^n}$ is the normal bundle of $X$ in $\PP^n$.
\end{defn}

We'll drop the superscript $X$ when it is understood.  First, we note that knowing $\Lambda_i$'s  is the same as knowing the Hilbert polynomial for $X$ satisfying $(*)$.

\begin{prop}\label{lambda}
Let $X\subset \PP^n$ satisfy \ref{star} and let $P_X(t) = \mu_0t^d + \cdots +\mu_{d-1}t + \mu_d$ be its Hilbert polynomial.  Then $(\Lambda_i)_{0\leq i \leq d}$ and $(\mu_i)_{0\leq i \leq d}$ are related by a lower triangular matrix whose entries depend only on $d$, $n$, and $\deg X$.
\end{prop}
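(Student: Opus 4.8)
The plan is to expand the Hirzebruch--Riemann--Roch formula of Theorem~\ref{HRR} and bookkeep by codimension. Since $\SO_X(t)$ is a line bundle with $c_1(\SO_X(t)) = t\,h_X$, we have $\on{Ch}(\SO_X(t)) = e^{t h_X}$, so $P_X(t) = \int_X e^{t h_X}\cdot \on{Td}(\mathscr{T}_X)$. The normal bundle sequence $0 \to \mathscr{T}_X \to \mathscr{T}_{\PP^n}|_X \to \SN_{X/\PP^n}\to 0$ gives $c(\mathscr{T}_X) = c(\mathscr{T}_{\PP^n}|_X)\cdot c(\SN_{X/\PP^n})^{-1}$, and because $\on{Td}$ is a homomorphism on $1+A^{>0}(X)$,
$$\on{Td}(\mathscr{T}_X) = \on{Td}(\mathscr{T}_{\PP^n}|_X)\cdot \on{Td}\!\l(\tfrac{1}{c(\SN_{X/\PP^n})}\r).$$
The Euler sequence gives $\on{Td}(\mathscr{T}_{\PP^n}|_X) = F(h_X)$ for the universal series $F(h) := \l(\tfrac{h}{1-e^{-h}}\r)^{n+1}$ depending only on $n$; and under Condition~\ref{star}, $c(\SN_{X/\PP^n})\in\ZZ[h_X]$ has constant term $1$, so $G(h_X) := \on{Td}(1/c(\SN_{X/\PP^n}))$ is an honest power series in the single class $h_X$ with $\QQ$-coefficients. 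Write $F(h) = \sum_a F_a h^a$ and $G(h) = \sum_b G_b h^b$.

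Next I would substitute $e^{t h_X} = \sum_k \tfrac{t^k}{k!} h_X^k$ and collect by graded pieces. Since $\int_X$ sees only the degree-$d$ part and $A^{>d}(X) = 0$, the term of $t$-degree $k$ contributes $h_X^k$ times the degree-$(d-k)$ part of $F(h_X)G(h_X)$, namely $(FG)_{d-k}\,h_X^d$, where $(FG)_m$ denotes the coefficient of $h^m$ in $F(h)G(h)$. Using $\int_X h_X^d = \deg X$ gives $P_X(t) = \deg X\sum_{k=0}^d \tfrac{(FG)_{d-k}}{k!}\,t^k$. Reading off the coefficient of $t^{d-j}$ and expanding $(FG)_j = \sum_{b=0}^j F_{j-b}G_b$,
$$\mu_j = \frac{\deg X}{(d-j)!}\sum_{b=0}^j F_{j-b}\,G_b = \frac{1}{(d-j)!}\sum_{b=0}^j F_{j-b}\,\Lambda_b,$$
where the last equality uses the identity $\Lambda_b = \int_X h_X^{d-b}\cdot G_b h_X^b = G_b\int_X h_X^d = \deg X\cdot G_b$ that unwinds the definition of $\Lambda_b$.

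This is exactly the asserted relation. The matrix $M = (M_{jb})$ with $M_{jb} = F_{j-b}/(d-j)!$ for $b\le j$ and $M_{jb} = 0$ for $b>j$ is lower triangular and satisfies $(\mu_j) = M\,(\Lambda_b)$; its entries are assembled from factorials (depending on $d$) and the coefficients $F_a$ of the universal series $(h/(1-e^{-h}))^{n+1}$ (depending on $n$), so they depend only on $d$ and $n$ --- in particular only on $d$, $n$, and $\deg X$, with $\deg X$ entering solely through $\Lambda_0 = \deg X$. Since $F_0 = 1$, the diagonal entries $M_{jj} = 1/(d-j)!$ are nonzero, so $M$ is invertible and the tuples $(\Lambda_i)$ and $(\mu_i)$ determine each other.

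The substantive points, none of them deep once HRR is available, are: (i) the multiplicativity $\on{Td}(c^{-1}) = \on{Td}(c)^{-1}$, which is precisely the homomorphism property recalled before Theorem~\ref{HRR}; and (ii) the verification that $\on{Td}(1/c(\SN_{X/\PP^n}))$ is a power series in the single class $h_X$. Point (ii) is where Condition~\ref{star} is indispensable: it forces every relevant class into $i^*A(\PP^n) = \ZZ[h_X]$, so that each integral collapses to a multiple of $\int_X h_X^d = \deg X$ and no finer knowledge of $A(X)$ is needed. The only real care required is the grading bookkeeping that makes both the triangular shape and the universality (in $d$ and $n$) of the coefficients transparent; I expect this organization, rather than any genuine obstruction, to be the main thing to get right.
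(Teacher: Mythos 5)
Your proof is correct and follows essentially the same route as the paper's: both expand $\on{Ch}(\SO_X(t)) = e^{th_X}$ in HRR, use the normal bundle sequence and the homomorphism property of $\on{Td}$ to factor $\on{Td}(\mathscr T_X)$ as $\on{Td}\l(\frac{1}{c(\SN_{X/\PP^n})}\r)$ times the series $\l(\frac{h_X}{1-e^{-h_X}}\r)^{n+1}$ (the paper's $\sum_i q_ih_X^i$, your $F$), and collect graded pieces to produce the lower triangular matrix with entries $q_{j-b}/(d-j)!$. Your write-up just makes explicit the bookkeeping (and the invertibility remark) that the paper leaves implicit in its displayed matrix.
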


\begin{proof}
As we have a short exact sequence $0 \to \mathscr T_X \to \mathscr T_{\PP^n}|_X \to \mathscr N_{X/\PP^n}\to 0$, we have $c(\mathscr T_X) = \frac{c(\mathscr T_{\PP^n})}{c(\mathscr N_{X/\PP^n})} = \frac{(1+h_X)^{n+1}}{c(\mathscr N_{X/\PP^n})}$.  Then by the Hirzebruch-Riemann-Roch theorem \ref{HRR} we have

$$\begin{array}{rl}
P_X(t) &= \int_X \on{Ch}(\SO_X(t)) \cdot \on{Td}(\mathscr T_X) \\[3mm]
&= \int_X (1+h_Xt + \frac{h_X^2t^2}{2!} + \cdots + \frac{h_X^dt^d}{d!})\cdot \on{Td}(\frac{1}{c(\mathscr N_{X/\PP^n})}) \cdot \on{Td}((1+h_X)^{n+1}).\\[3mm]
\end{array}
$$
Note that $\on{Td}((1+h_X)^{n+1})$ is a polynomial in $h_X$ that depend only on $d$ and $n$.  Thus, denoting $\operatorname{Td}(1+h_X)^{n+1}) = 1 + \frac{(n+1)}{2}h_X + \cdots = \sum_{i = 0}^d q_i h_X^i$, we have that
$$\begin{bmatrix}
\frac{1}{d!} & 0 & 0 & 0 & \cdots & 0\\[1mm]
\frac{q_1}{(d-1)!} & \frac{1}{(d-1)!} & 0 & 0 & \cdots & 0\\
\frac{q_2}{(d-2)!} & \frac{q_1}{(d-2)!} & \frac{1}{(d-2)!} & 0 & \cdots & 0\\
\vdots & \vdots & \vdots & \ddots &  & \vdots\\
\vdots & \vdots & \vdots &  & \ddots & \vdots\\
q_d & q_{d-1} & q_{d-2} & \cdots & \cdots & 1
\end{bmatrix}
\begin{bmatrix}
\Lambda_0\\
\Lambda_1\\
\Lambda_2\\
\Lambda_3\\
\vdots\\
\vdots\\
\Lambda_d
\end{bmatrix}
=
\begin{bmatrix}
\mu_0\\
\mu_1\\
\mu_2\\
\mu_3\\
\vdots\\
\vdots\\
\mu_d
\end{bmatrix}
$$
\end{proof}

\ms Kleiman had shown that the coefficients of the Hilbert polynomial of a projective variety satisfy certain rigid structure; for details see \cite{Kle71}.  For $X$ satisfying \ref{star},  we note another rigidity in the coefficients of the Hilbert polynomial.

\begin{thm}\label{evenodd}
Let $X\subset \PP^n$ be a $d$-dimensional smooth projective variety satisfying \ref{star}.  Let $P_X(t) = \mu_0t^{d} + \mu_1t^{d-1} + \cdots \mu_{d-1}t + \mu_{d}$ be its Hilbert polynomial.  Then for each $\ell  \leq d$ odd, $\mu_\ell$ is a $\QQ$-linear combination (independent of $X$) of $\l\{\frac{\mu_1^{\ell-i}\mu_{i}}{\mu_0^{\ell-i}}\r\}_{i<\ell }$.  In particular, for each $\ell\leq d$ odd, $\{\mu_{2i}\}_{2i<\ell}$ together with $\mu_1$ determine $\mu_\ell$; likewise, $\{\Lambda_{2i}\}_{2i<\ell}$ together with $\Lambda_1$ determine $\Lambda_\ell$.
\end{thm}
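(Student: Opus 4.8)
The plan is to reduce both assertions to a single structural fact about the Todd power series $Q(x) = \frac{x}{1-e^{-x}}$, namely that $\log Q(x) = \frac{x}{2} + (\text{even power series in } x)$. Granting this, the two statements (for $\mu$ via $\mathscr T_X$, and for $\Lambda$ via $\frac{1}{c(\mathscr N_{X/\PP^n})}$) come out by the same mechanism.

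First I would set up the reduction to one-variable Todd coefficients. Under $(*)$ we have $c(\mathscr N_{X/\PP^n}) \in \ZZ[h_X]$, so $c(\mathscr T_X) = \frac{(1+h_X)^{n+1}}{c(\mathscr N_{X/\PP^n})} \in 1 + h_X\ZZ[[h_X]]$ and likewise $\frac{1}{c(\mathscr N_{X/\PP^n})} \in 1 + h_X\ZZ[[h_X]]$. Writing $\on{Td}(\mathscr T_X) = \sum_j \tau_j h_X^j$ and $\on{Td}\l(\frac{1}{c(\mathscr N_{X/\PP^n})}\r) = \sum_j \lambda_j h_X^j$, the HRR computation of Proposition \ref{lambda} together with $\on{Ch}(\SO_X(t)) = e^{th_X}$ and $\int_X h_X^d = \deg X$ isolates the coefficient of $t^{d-\ell}$ as $\mu_\ell = \frac{\deg X}{(d-\ell)!}\tau_\ell$, while the definition of $\Lambda_\ell$ gives directly $\Lambda_\ell = (\deg X)\lambda_\ell$ (and $\Lambda_0 = \deg X$). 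So it suffices to prove a recursion among the $\tau_j$ (resp. $\lambda_j$) and then translate.

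Next I would prove the key lemma. For a formal factorization $\Phi = \prod_i(1 + \beta_i h_X) \in 1 + h_X\QQ[[h_X]]$ over $\ol\QQ$, the homomorphism property gives $\on{Td}(\Phi) = \prod_i Q(\beta_i h_X)$, so taking logarithms,
\[
\log \on{Td}(\Phi) = \frac{h_X}{2}\sum_i \beta_i \;+\; \sum_i E(\beta_i h_X),
\]
where $\log Q(x) = \frac{x}{2} + E(x)$ with $E$ even. Since $\sum_i E(\beta_i h_X)$ is even in $h_X$, the only odd-degree term of $\log \on{Td}(\Phi)$ is the linear one. The crux is the identity $\log Q(x) = \frac{x}{2} + (\text{even})$, which I would establish from $Q(x) = e^x g(x)$ with $g(x) = \frac{x}{e^x-1}$ and the functional equation $g(-x) = Q(x) = e^x g(x)$: setting $L = \log g$, this reads $L(-x) - L(x) = x$, forcing the odd part of $L$ to be $-\frac{x}{2}$, hence the odd part of $\log Q = x + L$ to be $\frac{x}{2}$. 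This functional-equation identity, together with the justification that $(*)$ makes the relevant Todd classes genuine (truncations of) single-variable power series in $h_X$ admitting the numerical Chern-root factorization above, is the main obstacle; everything after it is bookkeeping.

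Finally I would conclude. The lemma says $\on{Td}(\mathscr T_X) = e^{\tau_1 h_X}\cdot(\text{even})$, so the odd coefficients of $e^{-\tau_1 h_X}\on{Td}(\mathscr T_X)$ vanish; for each odd $\ell \leq d$ this yields
\[
\tau_\ell = -\sum_{m=1}^\ell \frac{(-\tau_1)^m}{m!}\,\tau_{\ell-m},
\]
involving only $\tau_j$ with $j \leq \ell \leq d$, so the truncation of $\on{Td}(\mathscr T_X)$ at degree $d$ is harmless. Substituting $\tau_\ell = \frac{(d-\ell)!}{\deg X}\mu_\ell$ and $\deg X = d!\,\mu_0$ and reindexing by $i = \ell - m$ produces the claimed combination $\mu_\ell = \sum_{i<\ell} c_{d,\ell,i}\,\frac{\mu_1^{\ell-i}\mu_i}{\mu_0^{\ell-i}}$ with explicit $c_{d,\ell,i}\in\QQ$ depending only on $d,\ell,i$ (hence independent of $X$); the identical computation for $\lambda$ gives $\Lambda_\ell = -\sum_{m=1}^\ell \frac{(-1)^m}{m!}\,\frac{\Lambda_1^m \Lambda_{\ell-m}}{\Lambda_0^m}$. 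The ``in particular'' determinacy statements then follow by strong induction on the odd indices: $\mu_0$ (even-indexed) and $\mu_1$ are given, the even $\mu_i$ with $i<\ell$ are given, and each lower odd $\mu_i$ is already expressed through these by the inductive hypothesis, so $\mu_\ell$ is determined; similarly for $\Lambda$.
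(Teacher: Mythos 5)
Your proof is correct, and it takes a genuinely different route from the paper's. The paper argues geometrically: condition $(*)$ forces $c_1(\omega_X) = kh_X$ for some $k\in\ZZ$, Serre duality then gives the functional equation $P_X(t) = (-1)^dP_X(k-t)$, differentiating and adding extracts the odd-coefficient relations, and HRR enters only at the end to identify $k = -2\mu_1/(d\mu_0)$. You work instead entirely on the characteristic-class side: your identity $\log Q(x) = \tfrac{x}{2}+(\text{even})$, i.e.\ $Q(x) = e^x Q(-x)$, is precisely the power-series shadow of Serre duality, and it makes $e^{-\tau_1 h_X}\on{Td}(\mathscr T_X)$ even, yielding a recursion expressing each odd $\tau_\ell$ (hence $\mu_\ell$, via $\mu_\ell = \frac{\deg X}{(d-\ell)!}\tau_\ell$) through $\tau_1$ and lower coefficients. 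What your route buys: explicit universal coefficients; a direct proof of the $\Lambda$-statement, $\Lambda_\ell = -\sum_{m=1}^{\ell}\frac{(-1)^m}{m!}\,\Lambda_1^m\Lambda_{\ell-m}/\Lambda_0^m$, without detouring through the triangular matrix of Proposition \ref{lambda}; and it isolates the true mechanism --- the rigidity holds for any multiplicative characteristic class whose logarithm has odd part concentrated in degree one. What the paper's route buys: brevity, a transparent role for the subcanonical degree $k$, and it avoids the bookkeeping you rightly flag as the remaining work, namely that $(*)$ lets one lift $c(\mathscr T_X)$ and $1/c(\mathscr N_{X/\PP^n})$ to $\ZZ[[h_X]]$, compute $\on{Td}$ formally there compatibly with $\QQ[[h_X]]\to A(X)\otimes\QQ$, and split into Chern roots over $\ol\QQ$ modulo $h_X^{d+1}$ (all standard, but worth writing out, since $\ZZ[h_X]\to A(X)$ need not be injective). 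It is also worth noting that the paper's step $\chi(\omega_X\otimes\SO_X(-t)) = \chi(\SO_X(k-t))$ itself silently uses that $\chi$ of a line bundle depends only on its first Chern class (HRR again), so the two arguments ultimately rest on the same fact; yours simply makes the formal content explicit.
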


\begin{proof}
Since we have $c(\mathscr T_X) = \frac{c(\mathscr T_{\PP^n})}{c(\mathscr N_{X/\PP^n})} = \frac{(1+h_X)^{n+1}}{c(\mathscr N_{X/\PP^n})}$, the condition \ref{star} implies that the canonical bundle $\omega_X = \det \mathscr T_X^\vee$ of $X$ satisfies $c(\omega_X) = 1- c_1(\mathscr T_X) = c(\SO_X(k))$ for some $k\in \ZZ$.   Then Serre duality gives $h^i(X, \SO_X(t)) = h^{d-i}(X, \omega_X \otimes \SO_X(-t))$ for $t\in \ZZ$, so that we have
$$P_X(t) = \chi(\SO_X(t)) = (-1)^d \chi(\SO_X(k-t)) = (-1)^dP_X(k-t)$$
so that for each $0\leq \ell \leq d$ we have
$$P_X^{(d-\ell)}(t) = (-1)^\ell P_X^{(d-\ell)}(k-t)$$
where $P_X^{(m)}(t)$ is the $m$-th formal derivative of $P_X(t)$.  Hence,
\begin{equation}
(d-\ell)!\mu_\ell = P_X^{(d-\ell)}(0) = (-1)^{\ell}P_X^{(d-\ell)}(k).
\end{equation}
Now, as a degree $\ell$ polynomial $f(t)$ satisfies $f(t) = f^{(\ell)}(0)\frac{t^\ell}{\ell!} + f^{(\ell - 1)}(0) \frac{t^{\ell-1}}{(\ell-1)!} + \cdots  +f^{(1)}(0) t + f(0)$, we have a manipulation
\begin{equation}
\begin{array}{rl}
P_X^{(d-\ell)}(0) &= \Big( P_X^{(d-\ell)}(0) - P_X^{(d-\ell)}(k) \Big) + P_X^{(d-\ell)}(k)\\[2mm]
&=\dsp  -\Big(P_X^{(d)}(0)\frac{k^\ell}{\ell!} + P_X^{(d-1)}(0)\frac{k^{\ell-1}}{(\ell-1)!}+ \cdots + P_X^{(d-\ell+1)}(0)k\Big) + P_X^{(d-\ell)}(k).
\end{array}
\end{equation}
For $\ell$ odd, adding (1) with (2) gives
\begin{equation}
\begin{array}{rl}
2(d-\ell)!\mu_\ell = 2P_X^{(d-\ell)}(0) &=  -\Big(P_X^{(d)}(0)\frac{k^\ell}{\ell!} + P_X^{(d-1)}(0)\frac{k^{\ell-1}}{(\ell-1)!}+ \cdots + P_X^{(d-\ell+1)}(0)k\Big)\\[3mm]
& = -\Big(\frac{d!}{\ell!}k^\ell\mu_0 + \frac{(d-1)!}{(\ell-1)!}k^{\ell-1}\mu_1+ \cdots + (d-\ell+1)!k\mu_{\ell-1}\Big).
\end{array}
\end{equation}
Lastly, by Hirzebruch-Riemann-Roch Theorem \ref{HRR}, we have
$$\mu_1= \int_X \frac{h_X^{d-1}}{(d-1)!} \on{Td}(\mathscr T_X)_1= \int_X \frac{h_X^{d-1}}{(d-1)!} \frac{c_1(\mathscr T_X)}{2} = \int_X \frac{h_X^{d-1}}{(d-1)!} \cdot\frac{-kh_X}{2} = -\frac{(\deg X)k}{2(d-1)!} = \frac{-d}{2} \mu_0k.$$

Combining this with (3), we have that $\mu_\ell$ for $\ell$ odd is a $\QQ$-linear combination of  $\{\frac{\mu_1^{\ell-i}\mu_{i}}{\mu_0^{\ell-i}}\}_{i < \ell}$ where the coefficients only depend on $d$ and $\ell$.  The second statement of the theorem about $\Lambda_i$'s then follows immediately from the first via Proposition \ref{lambda}.
\end{proof}

\subsection{$\Lambda_i^X$'s for $X$ a complete intersection}  We now discuss the use of $\Lambda_i^X$'s in analyzing the complete intersection cases.

\ms
Let $X:= X(a_1, \ldots, a_c)\subset \PP^n$ be a complete intersection.  In regards to the firmness of $(c,n)$, it is harmless to assume that $X$ is smooth via the classical Bertini theorem as we assume our ground field to be algebraically closed with characteristic zero.  Then, as $c(\mathscr N_{X/\PP^n}) = \prod_{i=1}^c(1+a_ih_X)$, we have that $X$ satisfies the condition \ref{star}.    In the case of complete intersections, we have $\frac{1}{c(\mathscr N_{X/\PP^n})} = \prod_{i=1}^c \frac{1}{(1+a_ih_X)} = \sum_{j = 0}^{n-c} C_j(\ul a) h_X^j$ where $C_j(\ul a)$ is $(-1)^j$ times the degree $j$ complete homogeneous symmetric polynomial in $(a_1, \ldots, a_c)$.

\ms
We now recall the $j$-th formal Todd polynomial $T_j(\epsilon_1, \ldots,\epsilon_j)$ in formal variables $\epsilon_1, \ldots, \epsilon_j$, which is obtained by the following process.  Let $b_1, \ldots, b_j$ be formal variables and set $\epsilon_1, \ldots, \epsilon_j$ be their elementary symmetric functions, and consider the power series $\wt T:= \prod_{i=1}^j \frac{b_i}{1-e^{-b_i}}\in \QQ[[b_1, \ldots, b_j]]$; then, define $T_j$ to be the degree $j$ part of $\wt T$ written in terms of the elementary symmetric functions $\epsilon_1, \ldots, \epsilon_j$.  For example, $T_2 = \frac{\epsilon_1^2 + \epsilon_2}{12}$ since $(\frac{b_1}{1-e^{-b_1}})(\frac{b_2}{1-e^{-b_2}}) = (1+ \frac{b_1}{2} + \frac{b_1^2}{12} + \cdots)(1+ \frac{b_2}{2} + \frac{b_2^2}{12} + \cdots)$ and $b_1^2 + 3b_1b_2 + b_2^2 = (b_1+b_2)^2 + b_1b_2$.  For $\al = \sum_{i=0}^d \al_i h_X \in \QQ[h_X] \subset A(X)$ with $\al_0 = 1$, we have $T_i(\al_1, \ldots, \al_i)\in \QQ$ given by $\on{Td}(\al)_i = T_i(\al_1, \ldots, a_i) h_X^i$.

\medskip
Now, noting the value of $\frac{1}{c(\mathscr N_{X/\PP^n})}$ for complete intersections as discussed above, we have:

\begin{prop}\label{nindep}
Let $X(a_1, \ldots, a_c)\subset \PP^n$ be a (smooth) complete intersection.  Then 
$$\Lambda_i^X = (\deg X)T_i(C_1(\ul a), \ldots, C_i(\ul a))$$
where $C_j(\ul a)$ is the degree $j$ complete homogeneous symmetric polynomial in $(a_1, \ldots, a_c)$.    In particular, the values of $\Lambda_i^X$ for $0\leq i \leq n-c$ are independent of the ambient dimension $n$.
\end{prop}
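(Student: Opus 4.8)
The plan is to unwind the definition of $\Lambda_i^X$ directly, using the explicit expansion of $\frac{1}{c(\mathscr N_{X/\PP^n})}$ recorded just above together with the formal Todd polynomials $T_i$. Since
$$\frac{1}{c(\mathscr N_{X/\PP^n})} = \sum_{j=0}^{n-c} C_j(\ul a)\, h_X^j$$
is an element of $1+A^{>0}(X)$ written as a polynomial in $h_X$ with constant term $1$, the defining property of the $i$-th formal Todd polynomial gives
$$\on{Td}\l(\frac{1}{c(\mathscr N_{X/\PP^n})}\r)_i = T_i\l(C_1(\ul a), \ldots, C_i(\ul a)\r)\, h_X^i.$$
I would then substitute this into $\Lambda_i^X = \int_X h_X^{d-i}\cdot \on{Td}\l(\frac{1}{c(\mathscr N_{X/\PP^n})}\r)_i$, pull the scalar $T_i(C_1(\ul a), \ldots, C_i(\ul a)) \in \QQ$ out of the degree map, and use $\deg X = \int_X h_X^d$ to obtain $\Lambda_i^X = (\deg X)\, T_i(C_1(\ul a), \ldots, C_i(\ul a))$. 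This is the desired formula.

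The one point that requires a little care — and the closest thing to an obstacle — is the truncation. Since $\dim X = d = n-c$ and $h_X^{d+1}=0$ in $A(X)$, the class $\frac{1}{c(\mathscr N_{X/\PP^n})}$ computed inside $A(X)$ is only the truncation to degrees $\le d$ of the formal power series $\prod_{i=1}^c (1+a_i h_X)^{-1}$, so the coefficients $C_j(\ul a)$ are visible only for $j\le d$. I would point out that this causes no difficulty: by construction $\on{Td}(\alpha)_i$ depends only on the graded parts $\alpha_1, \ldots, \alpha_i$ of $\alpha$, and we evaluate only at indices $i\le d$, so the truncation never suppresses a coefficient that enters $T_i(C_1(\ul a), \ldots, C_i(\ul a))$. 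Stating this explicitly legitimizes carrying out the formal Todd-polynomial manipulation inside the finite-dimensional ring $A(X)$.

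Finally, for the independence statement I would observe that both ingredients of the formula depend only on the degree sequence $(a_1, \ldots, a_c)$ and not on $n$: the quantities $C_j(\ul a)$ are symmetric functions of $(a_1, \ldots, a_c)$ alone, and $\deg X = a_1\cdots a_c$. Hence for a fixed degree sequence, passing from $\PP^n$ to $\PP^{n'}$ with $n'>n$ merely enlarges the range $0\le i\le n'-c$ over which the identical expression $(\deg X)\,T_i(C_1(\ul a), \ldots, C_i(\ul a))$ computes $\Lambda_i$, so the values $\Lambda_i^X$ for $0\le i\le n-c$ coincide across all such ambient projective spaces. Overall the argument is a direct computation unwinding the definitions, with the truncation bookkeeping above being the only subtlety.
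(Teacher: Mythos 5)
Your proposal is correct and follows exactly the route the paper intends: the paper states Proposition \ref{nindep} as an immediate consequence of the expansion $\frac{1}{c(\mathscr N_{X/\PP^n})} = \sum_j C_j(\ul a) h_X^j$ together with the defining property $\on{Td}(\al)_i = T_i(\al_1,\ldots,\al_i)h_X^i$ and $\int_X h_X^d = \deg X$, which is precisely the computation you carry out. Your added remark that the truncation $h_X^{d+1}=0$ is harmless (since $\on{Td}(\al)_i$ depends only on $\al_1,\ldots,\al_i$ and one only evaluates at $i\le d$) is a sound clarification of a point the paper leaves implicit.
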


As an immediate corollary, combining Proposition \ref{nindep} above with Proposition \ref{lambda} gives us

\begin{prop}\label{nplus1}
If $(c,n)$ is firm, then so is $(c, n+1)$.
\end{prop}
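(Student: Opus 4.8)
The plan is to recast firmness entirely in terms of the invariants $\Lambda_i$ and then to observe that enlarging the ambient space only enlarges the list of available invariants. By Proposition \ref{nindep} the value $\Lambda_i^{X(\ul a)} = (\deg X)\,T_i(C_1(\ul a), \ldots, C_i(\ul a))$ depends only on the degree sequence $\ul a = (a_1, \ldots, a_c)$ and not on the ambient dimension, so I would first fix notation by writing $\Lambda_i(\ul a)$ for this ambient-independent quantity, defined for every $i \ge 0$. Note in particular that $T_0 = 1$, so $\Lambda_0(\ul a) = \deg X = \prod_j a_j$.

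The first step is to check that, for a fixed pair $(c,n)$ with $n \ge c$, two codimension-$c$ complete intersections $X(\ul a), X(\ul b) \subset \PP^n$ have the same Hilbert polynomial if and only if $\Lambda_i(\ul a) = \Lambda_i(\ul b)$ for all $0 \le i \le n-c$. This is where Proposition \ref{lambda} enters: the lower-triangular transition matrix there has diagonal entries $\tfrac{1}{(d-i)!} \neq 0$, hence is invertible, and its entries depend only on $d = n-c$, on $n$, and possibly on $\deg X$. The one point needing care is this last dependence; but $\deg X = \Lambda_0$ is the first coordinate of the $\Lambda$-tuple, and likewise $\deg X = d!\,\mu_0$ is read off the Hilbert polynomial, so both data sets determine the same matrix and the equivalence is a genuine two-way determination rather than a one-sided implication. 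Consequently, firmness of $(c,n)$ is equivalent to the statement that the map $\ul a \mapsto (\Lambda_0(\ul a), \ldots, \Lambda_{n-c}(\ul a))$ is injective on degree sequences of length $c$ taken up to symmetry.

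With this reformulation the proposition becomes a monotonicity statement. Passing from $\PP^n$ to $\PP^{n+1}$ raises $d = \dim X$ from $n-c$ to $n+1-c$, so the associated tuple acquires exactly one new coordinate $\Lambda_{n+1-c}(\ul a)$ while retaining all the previous ones, each intrinsic to $\ul a$ by Proposition \ref{nindep}. Explicitly, suppose $(c,n)$ is firm and let $X(\ul a), X(\ul b) \subset \PP^{n+1}$ be codimension-$c$ complete intersections with equal Hilbert polynomials. Applying the equivalence above in $\PP^{n+1}$ gives $\Lambda_i(\ul a) = \Lambda_i(\ul b)$ for all $0 \le i \le n+1-c$, and in particular for all $0 \le i \le n-c$. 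Since every degree sequence is realized by some complete intersection in $\PP^n$ as well, these same equalities say that $X(\ul a)$ and $X(\ul b)$, now viewed in $\PP^n$, have equal Hilbert polynomials; firmness of $(c,n)$ then forces $\ul a = \ul b$ up to symmetry, so $(c,n+1)$ is firm.

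Since the argument is purely formal once the reformulation is in place, there is no computational obstacle; the only things to be vigilant about are the two bookkeeping points above, namely that the transition matrix is recovered from the data it transforms via $\Lambda_0 = \deg X$, and that the quantities being compared across $\PP^n$ and $\PP^{n+1}$ are literally the same numbers by the ambient-independence of Proposition \ref{nindep}. I would also restrict throughout to $n \ge c$, so that codimension-$c$ complete intersections exist and the $\Lambda$-tuples are nonempty; the degenerate range $n < c$ contributes nothing to the quantities $N_c$ (which satisfy $N_c > c$ for $c > 1$) and may be ignored.
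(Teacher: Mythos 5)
Your proof is correct and follows exactly the route the paper intends: the paper derives Proposition \ref{nplus1} as an immediate consequence of combining Proposition \ref{nindep} (ambient-independence of the $\Lambda_i$'s) with Proposition \ref{lambda} (the invertible lower-triangular relation between the $\Lambda_i$'s and the Hilbert coefficients), which is precisely your argument spelled out in detail. Your extra care about the matrix's dependence on $\deg X$ being harmless (since $\deg X = \Lambda_0 = d!\,\mu_0$ is determined by either data set) is a worthwhile clarification of a point the paper leaves implicit.
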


\begin{cor}\label{evenodd2}
Suppose $c>2$, and $c\equiv n \mod 2$.  Then $(c,n)$ not firm implies $(c, n+1)$ not firm.  $N_c \equiv c \mod 2$ for $c>2$ and $N_c$ finite.
\end{cor}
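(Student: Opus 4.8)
The plan is to recast firmness entirely in terms of the invariants $\Lambda_i$. By Proposition \ref{nindep} the value $\Lambda_i^X$ depends only on the degree sequence $(a_1,\dots,a_c)$ and not on the ambient dimension, and by Proposition \ref{lambda} the tuple $(\Lambda_0,\dots,\Lambda_{n-c})$ carries exactly the same information as the Hilbert polynomial of $X\subset\PP^n$. Consequently $(c,n)$ is firm precisely when the assignment sending a degree sequence (up to symmetry) to $(\Lambda_0,\dots,\Lambda_{n-c})$ is injective, the $\Lambda_i$ being symmetric functions of the $a_i$ and hence well defined on multisets. Comparing $(c,n)$ with $(c,n+1)$ therefore reduces to comparing the truncation $(\Lambda_0,\dots,\Lambda_{n-c})$ with the one-longer truncation $(\Lambda_0,\dots,\Lambda_{n-c+1})$, which differ only by the single appended entry $\Lambda_{n-c+1}$.

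For the first assertion I would suppose $c\equiv n\pmod 2$, so that $d:=n-c$ is even and the appended index $d+1$ is odd. If $d\ge 2$, then $d+1\ge 3$, and the $\Lambda$-form of Theorem \ref{evenodd} expresses $\Lambda_{d+1}$ as a fixed (degree-sequence-independent) function of $\Lambda_1$ and $\{\Lambda_{2i}\}_{2i<d+1}$, all of which already occur in $(\Lambda_0,\dots,\Lambda_d)$. Hence the two truncations determine one another identically, their injectivity conditions coincide, and in particular $(c,n)$ not firm forces $(c,n+1)$ not firm. The only residual case is $d=0$, i.e.\ $n=c$: here $\Lambda_1$ is genuinely new information (Theorem \ref{evenodd} makes only odd indices $\ge 3$ redundant), so the redundancy argument breaks down, and I would instead argue directly that $(c,c+1)$ is not firm for $c>2$, treated below, making the implication hold trivially.

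Finally, for the parity statement I would assume $N_c$ finite and write $n_0:=N_c$; since $c>1$ forces $n_0>c$, the appended index $n_0-c$ is at least $1$. By minimality $(c,n_0-1)$ is not firm while $(c,n_0)$ is, so passing from $n_0-1$ to $n_0$ must change injectivity, and by the previous paragraph this can occur only if the appended entry $\Lambda_{n_0-c}$ is \emph{not} redundant. As Theorem \ref{evenodd} renders every odd index $\ge 3$ redundant, the non-redundant indices are exactly $1$ and the even ones, so $n_0-c$ is either $1$ or even. The case $n_0-c=1$ (i.e.\ $d=1$) is the crux and the one place where the hypothesis $c>2$ is essential: one computes $\Lambda_0=\deg X=\prod_i a_i$ and $\Lambda_1=-\tfrac12(\deg X)\sum_i a_i$, so $(\Lambda_0,\Lambda_1)$ recovers exactly the product and sum of the $a_i$ and nothing more. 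For $c=2$ these determine the pair (which is why $(2,3)$ is firm and the parity pattern differs there), but for $c>2$ the sum and product fail to pin down the multiset — for instance $\{1,5,8\}$ and $\{2,2,10\}$, padded with $1$'s to length $c$, share both — so $(c,c+1)$ is not firm, contradicting $N_c=c+1$. Hence $n_0-c$ is even, i.e.\ $N_c\equiv c\pmod 2$. The main obstacle is precisely this low-dimensional edge ($d=0,1$), where Theorem \ref{evenodd} provides no redundancy for $\Lambda_1$ and the whole parity phenomenon hinges on the failure of ``sum and product determine the multiset'' for $c>2$.
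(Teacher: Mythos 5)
Your proof is correct, and its engine is the same as the paper's: reformulate firmness as injectivity of the map from degree sequences to $(\Lambda_0,\dots,\Lambda_{n-c})$ via Propositions \ref{lambda} and \ref{nindep}, then use Theorem \ref{evenodd} to see that when $n-c$ is even the appended invariant $\Lambda_{n+1-c}$ has odd index and is therefore redundant, so the injectivity conditions for $(c,n)$ and $(c,n+1)$ coincide. Where you differ is in the low-dimensional edge cases, and there your extra care is not pedantry but a genuine repair. The paper's entire proof is the sentence ``as $n-c$ is even, $\{\Lambda_i\}_{i\le n-c}$ determine $\Lambda_{n+1-c}$ by Theorem \ref{evenodd},'' which is valid only when $n-c\ge 2$: for $n=c$ the appended index is $\ell=1$, for which Theorem \ref{evenodd} is vacuous, and indeed $\Lambda_0=\prod_i a_i$ does not determine $\Lambda_1=-\tfrac{1}{2}\prod_i a_i\sum_i a_i$. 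In particular, when the paper deduces $N_c\equiv c\pmod 2$, the possibility $N_c=c+1$ is not actually excluded by its argument; ruling it out requires precisely your separate observation that $(c,c+1)$ is never firm for $c>2$, because a one-dimensional complete intersection's Hilbert polynomial records only $\sum_i a_i$ and $\prod_i a_i$, and sum and product fail to determine a multiset of three or more positive integers (your $\{1,5,8\}$ versus $\{2,2,10\}$, both with sum $14$ and product $40$, padded by $1$'s, works; the paper's own Example \ref{eg1} is a coincidence of exactly this kind). You also correctly isolate why $c>2$ is essential: for $c=2$ sum and product do determine the pair, which is why $N_2=3$ escapes the parity pattern. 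So: same route as the paper, plus a patch at $d\in\{0,1\}$ that the paper's one-line proof actually needs.
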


\begin{proof}
As $n-c$ is even, this implies that $\{\Lambda_i\}_{i\leq n-c}$ determine $\Lambda_{n+1-c}$ by Theorem \ref{evenodd}.
\end{proof}

\ms
Lastly, in preparation for the next section, we define $\wt \Lambda_i^c$ as follows.
\begin{defn} Fix $c\in \NN$ and let  $\underline \xi = (\xi_1, \ldots, \xi_c)$ be formal variables, and set $e_j$ for $j = 1, \ldots, c$ to be the $j$-th elementary symmetric polynomial in the $\xi_i$'s.  Define for $0\leq i \leq n-c$ the polynomial $\wt \Lambda_i^c\in \QQ[e_1, \ldots, e_c]$ by
$$
\wt\Lambda_i^c := \left\{
\begin{array}{lr}
e_c & \tn{if $i = 0$}\\
T_i(C_1(\ul \xi), \ldots, C_i(\ul \xi))\tn{ written in elementary symmetric polynomials} & \tn{if $i>0$}
\end{array}\right.
$$
\end{defn}

Proposition \ref{nindep} implies the following two facts about $\widetilde\Lambda_i^c$.  When $(\xi_1, \ldots, \xi_c) = (a_1, \ldots, a_c)$ for a complete intersection $X = X(a_1, \ldots, a_c)\subset \PP^n$, the coefficients $\{\mu_i\}_{0\leq i \leq n-c}$ of $P_X(t)$ is again related to $\{\widetilde\Lambda_i^c\}_{0\leq i \leq n-c}$ by a lower triangular matrix.  Moreover, $\wt \Lambda_i^c$ does not depend on $n$.

\medskip
Sometimes, we will convert from writing symmetric functions in elementary symmetric polynomials to writing them in \emph{monomial symmetric polynomials} $M_\lambda$ as defined in \cite[Appendix A]{FH91}:  for $\lambda = (\lambda_1, \ldots, \lambda_p)$ a partition of $d$ with $p\leq c$ parts, define $M_\lambda\in \QQ[a_1, \ldots, a_c]$ as $M_\lambda := \sum a_{i_1}^{\lambda_1}\cdots a_{i_p}^{\lambda_p}$.  For the conversions between the two families of symmetric polynomials, see \cite[I.6]{Mac15}.

\begin{rem}
As with symmetric functions, there are other ways to encode the information in the Hilbert polynomial $P_X(t)$ of a complete intersection such that similar properties in Proposition \ref{lambda} and Proposition \ref{nindep} are satisfied, but they seem to make for a lengthier exposition and computation.  For example, as the Koszul complex of the defining homogeneous forms of a complete intersection forms the minimal free resolution of the coordinate ring, one can interpret the resulting Hilbert polynomial and successive first differences as some combinatorial quantities via inclusion-exclusion principle.  Another way is to write the Hilbert polynomial as sums of Hilbert polynomials of projective spaces $P_X(t) = \sum_i c_i P_{\PP^i}(t)$.
\end{rem}

\section{Codimension $\leq$6 cases}

In this section, we prove the following theorem.

\begin{thm}\label{Nc}
We have $N_1 = 1$, $N_2 = 3$, $N_3 = 5$, $N_4 =
8$, $N_5 = 11$, $N_6 = 14$.
\end{thm}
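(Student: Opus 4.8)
The plan is to compute $N_c$ for each $c$ from $1$ to $6$ by reducing the problem to an analysis of the $\wt\Lambda_i^c$'s, exploiting the fact that these encode the same information as the Hilbert polynomial (Proposition \ref{lambda}, Proposition \ref{nindep}) but are independent of $n$ and symmetric in the degrees. For a fixed codimension $c$, firmness of $(c,n)$ amounts to the assertion that the map sending a degree sequence $(a_1,\ldots,a_c)$ (up to symmetry) to the tuple $(\wt\Lambda_0^c,\ldots,\wt\Lambda_{n-c}^c)$ is injective on positive integer inputs. Since $\wt\Lambda_0^c = e_c$ and each $\wt\Lambda_i^c$ is a symmetric polynomial in the $a_j$, knowing the $\wt\Lambda_i^c$ for $0 \le i \le n-c$ is equivalent to knowing certain power-sum/elementary-symmetric data of the degrees. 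The key observation is that because the $\wt\Lambda_i^c$ are built from the complete homogeneous symmetric functions $C_j(\ul a)$ via the Todd polynomials, knowing $\wt\Lambda_i^c$ for $i = 0, \ldots, m$ is (generically) equivalent to knowing the first $m{+}1$ elementary symmetric functions $e_1,\ldots,e_{m+1}$ — hence when $m = c-1$ we know all of $e_1,\ldots,e_c$ and can recover the degree sequence.

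First I would make the equivalence precise: translate ``$(c,n)$ firm'' into ``the values $\wt\Lambda_0^c,\ldots,\wt\Lambda_{n-c}^c$ determine $e_1,\ldots,e_c$ on positive-integer inputs,'' and then use the parity rigidity of Theorem \ref{evenodd} (as packaged in Corollary \ref{evenodd2}) to cut the number of relevant $\wt\Lambda_i$'s roughly in half: the odd-indexed $\wt\Lambda$'s beyond $\wt\Lambda_1$ are redundant. This explains the parity constraint $N_c \equiv c \bmod 2$ for $c > 2$ and reduces the target to checking, for each small $c$, the smallest even (resp. appropriately-parity) $n$ at which the \emph{irredundant} $\wt\Lambda_i$'s separate all degree sequences. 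Concretely, for each $c \le 6$ I would (i) write out the polynomials $\wt\Lambda_i^c$ in terms of $e_1,\ldots,e_c$ (or power sums) up to the needed index, using $T_2 = \tfrac{\epsilon_1^2+\epsilon_2}{12}$ and the analogous higher Todd polynomials; (ii) show that at $n = N_c$ the available $\wt\Lambda_i$'s form an injective system on positive-integer degree sequences; and (iii) exhibit, at $n = N_c - 1$ (or $N_c - 2$ to respect parity), an explicit pair of distinct degree sequences with identical Hilbert polynomial, proving non-firmness just below the claimed threshold.

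The two halves of each case have different character. For the upper bound (firmness at $n = N_c$) the argument is essentially algebraic: once enough $\wt\Lambda_i$'s are available, one solves the triangular-ish system to recover $e_1,\ldots,e_c$ and hence the multiset $\{a_1,\ldots,a_c\}$; the subtlety is that the system is only generically triangular after clearing denominators, so one must confirm there are no collisions among genuinely \emph{positive integer} solutions, not merely among formal symmetric functions. For the lower bound (non-firmness at $n = N_c - 1$) the work is to produce explicit degree-sequence collisions, as foreshadowed by the pair $(46,36,32,15,12,5)$ and $(45,40,24,23,8,6)$ cited in the introduction for $c = 6$, $n = 13$; finding these is a Diophantine search, and certifying that the listed examples are minimal (and that none exists one dimension higher) is what pins down $N_c$ exactly.

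The main obstacle I expect is the lower-bound/non-firmness direction: showing that firmness genuinely \emph{fails} at $n = N_c - 1$ requires exhibiting two positive-integer degree sequences agreeing in all the relevant $\wt\Lambda_i^c$ (equivalently, all the non-redundant Hilbert-polynomial coefficients), and doing so with the right parity is a delicate Diophantine problem with no obvious closed-form solution — especially as $c$ grows to $5$ and $6$, where the number of matching symmetric-function conditions increases and the collision examples become large. By contrast, the firmness direction is more mechanical once the $\wt\Lambda_i^c$ are expanded, since it reduces to inverting a known system of symmetric-polynomial relations. I would therefore organize the proof case-by-case, handling the easy small codimensions ($c \le 3$) directly and reserving the bulk of the effort for constructing and verifying the critical collision examples in codimensions $4$, $5$, and $6$.
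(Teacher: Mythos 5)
Your overall strategy matches the paper's: translate firmness into injectivity of the $\wt\Lambda_i^c$ data, use the parity rigidity of Theorem \ref{evenodd} to discard odd-indexed $\wt\Lambda$'s beyond $\wt\Lambda_1$, exhibit computer-found collisions to establish non-firmness one (or two) dimensions below the threshold, and recover $e_1,\ldots,e_c$ algebraically at the threshold. But your proposal has a genuine gap, and it sits exactly where you declared the argument ``mechanical.'' First, your stated key observation --- that knowing $\wt\Lambda_i^c$ for $i=0,\ldots,m$ is generically equivalent to knowing $e_1,\ldots,e_{m+1}$ --- is false, and is contradicted by the parity theorem you invoke two sentences later: $\wt\Lambda_3^c$ is a function of $\wt\Lambda_0^c,\wt\Lambda_1^c,\wt\Lambda_2^c$ and so carries no new information about $e_3$ (for $c=4$, the sequences $(2,6,7,15)$ and $(3,3,10,14)$ agree in $e_1,e_2,e_4$ and hence in all of $\wt\Lambda_0^4,\ldots,\wt\Lambda_3^4$, yet differ in $e_3$). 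If the observation were true, $(c,2c-1)$ would always be firm, giving $N_4\le 7$, $N_5\le 9$, $N_6\le 11$, contradicting the very theorem you are trying to prove.

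Second, and more seriously: the firmness direction for $c=5,6$ is not an inversion of a ``triangular-ish'' system, because the available invariants genuinely fail to determine the degree sequence formally (or over the reals) --- they only do so on positive inputs. For $c=5$ at $n=11$, the data $\wt\Lambda_0,\wt\Lambda_1,\wt\Lambda_2,\wt\Lambda_4,\wt\Lambda_6$ yields $e_5,e_1,e_2$, then the combination $e_4-e_1e_3$, and then a \emph{quadratic} equation for $e_3$, which has two roots; your proposal has no mechanism for choosing between them. The paper's resolution is a positivity argument: the two roots average to $e_1e_2$, and $e_1e_2-e_3 = M_{2,1}+2M_{1,1,1}>0$ for positive degrees, so $e_3$ must be the smaller root. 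For $c=6$ at $n=14$ the analogous substitutions produce a \emph{cubic} in $e_3$, and the root selection requires showing the sum of the roots is negative and then locating $e_3$ relative to the larger critical point of the cubic, which rests on the positivity of $(e_1e_2-e_3)e_3 - e_1(e_1e_4-e_5)$, verified by expanding in monomial symmetric polynomials. These inequalities are the actual mathematical content of the upper bounds $N_5\le 11$ and $N_6\le 14$; without them your outline cannot conclude. By contrast, the non-firmness half you flagged as the main obstacle is, once the examples are in hand, a one-line verification that two Hilbert polynomials coincide; and minimality of the examples is never needed --- only their existence at the right ambient dimension, combined with Corollary \ref{evenodd2}.
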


\begin{rem}
The computations for $\wt \Lambda_i$'s and other relevant equations in this sections were done on Macaulay2.  The code used is available on the first author's website at \url{https://math.berkeley.edu/~ceur/code/ciHilbPoly.m2}. The command most used is \verb+ciToddPoly(c,n)+, which 
prints out $\wt\Lambda_i^c$ for $0\leq i \leq n-c$.  

To find an example showing that (4,7), (5,10), and (6,13) are not firm, a naive search by comparing Hilbert polynomials for various sequences proves unfruitful.  For example, two sequences given in Example  \ref{c6eg} are the smallest in the sense that the sum of the entries in degree sequences is minimum possible, but there are ${51 \choose 6} =  18009460$ sequences of positive integers of length 6 (up to symmetry) with each entry $\leq 46$.  A more efficient search algorithm exploiting the nature of $\wt\Lambda_i$'s is implemented as \verb+iterSearch+.
\end{rem}

\subsection{Codimension $\leq 4$} That $N_1 = 1$ is obvious and $N_2 = 3$ was done in the introduction.  For $N_3$, we have

\begin{dmath*}
\wt\Lambda^3_0 = e_3
\end{dmath*}

\begin{dmath*} \wt\Lambda^3_1 = ({e}_{1}) (-1/2)
\end{dmath*}

\begin{dmath*} \widetilde\Lambda^3_2 = (2 {e}_{1}^{2}-{e}_{2}) (1/12)
\end{dmath*}
so that $e_1, e_2, e_3$ are recoverable.  Hence $(3,5)$ is firm, and $N_3 =5$ is the minimum since $(3,4)$ is not firm as the following example shows:

\begin{eg}\label{eg1} Consider two complete intersection curves $X(2,5,9), X(3,3,10) \subset \PP^4$.  Both have $P_X(t) = 90t - 495$ as the Hilbert polynomial.
\end{eg}

Up to $c \leq 3$, the condition $n-c+1 \geq c$ is necessary and sufficient condition for $(c,n)$ to be firm.  Starting with $c = 4$ however, $n-c+1 \geq c$ is no longer sufficient.

\begin{eg}
Consider $X(2,6,7,15)$ and $X(3,3,10,14)$ in $\PP^7$, which both have Hilbert polynomial $210 t^{3}-6930 t^{2}+92295 t-456225$.
\end{eg} 

In fact, Theorem \ref{evenodd} explains why.  Computing $\wt\Lambda^4_i$ for $i = 0,1,2,3$ we get

\begin{dmath*}
\widetilde\Lambda_0^4 = e_4
\end{dmath*}

\begin{dmath*}  \widetilde\Lambda_1^4 = ({e}_{1})  (-1/2)
\end{dmath*}

\begin{dmath*} \widetilde\Lambda_2^4 = (2 {e}_{1}^{2}-{e}_{2}) (1/12)
\end{dmath*}

\begin{dmath*} \widetilde\Lambda_3^4 = ({e}_{1})   ({e}_{1}^{2}-{e}_{2}) (-1/24)
\end{dmath*}
As expected from Theorem \ref{evenodd}, $\wt \Lambda_3^4$ is determined from $\wt \Lambda_0^4, \wt \Lambda_1^4, \wt \Lambda_2^4$.  While we can determine $e_4, e_1, e_2$ of $(a_1, \ldots, a_4)$, we see that $e_3$ is not recoverable from the Hilbert polynomial in this case.  The two sequences (2,6,7,15) and (3,3,10,14) have the same $e_1, e_2, e_4$ but different $e_3$.

Increasing $n$ to 8, one computes that $\wt\Lambda_4^4 = (6 {e}_{1}^{4}-9 {e}_{1}^{2} {e}_{2}+2{e}_{2}^{2}-{e}_{1} {e}_{3}+{e}_{4}) (1/720)$,  so that

\begin{prop}
$(4,8)$ is firm.
\end{prop}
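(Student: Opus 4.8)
The plan is to show that for a codimension-4 complete intersection $X(a_1,\ldots,a_4)\subset\PP^8$, the Hilbert polynomial determines the degree sequence up to symmetry. By Proposition \ref{nindep}, the Hilbert polynomial is equivalent data to the collection $\{\wt\Lambda_i^4\}_{0\leq i\leq 4}$ (they differ by an invertible lower-triangular change of basis via Proposition \ref{lambda}), so it suffices to show that these five quantities determine the elementary symmetric polynomials $e_1,e_2,e_3,e_4$ in $(a_1,a_2,a_3,a_4)$. Since the $e_i$ determine the multiset $\{a_1,\ldots,a_4\}$ by the fundamental theorem of symmetric functions (the $a_i$ are the roots of $\prod_i(t-a_i)=t^4-e_1t^3+e_2t^2-e_3t+e_4$), recovering all four $e_i$ finishes the proof.

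First I would read off from the already-computed list of $\wt\Lambda_i^4$ that $\wt\Lambda_0^4=e_4$, $\wt\Lambda_1^4=-\tfrac12 e_1$, and $\wt\Lambda_2^4=\tfrac1{12}(2e_1^2-e_2)$ already recover $e_4$, then $e_1$, then $e_2$ in that order. The only missing invariant is $e_3$. As flagged in the excerpt, $\wt\Lambda_3^4$ is of no help: Theorem \ref{evenodd} guarantees (since $3$ is odd) that $\wt\Lambda_3^4$ is forced by $\wt\Lambda_0^4,\wt\Lambda_1^4,\wt\Lambda_2^4$, and indeed the printed value $\wt\Lambda_3^4=-\tfrac1{24}e_1(e_1^2-e_2)$ depends only on $e_1,e_2$. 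This is exactly why $(4,7)$ fails. The point of passing to $\PP^8$ is that we gain the next invariant $\wt\Lambda_4^4$, an \emph{even}-index quantity not constrained by Theorem \ref{evenodd}, and one checks from the given formula
$$
\wt\Lambda_4^4=\tfrac1{720}\bigl(6e_1^4-9e_1^2e_2+2e_2^2-e_1e_3+e_4\bigr)
$$
that it contains the term $-\tfrac1{720}e_1e_3$, so $e_3$ appears with a nonzero coefficient provided $e_1\neq 0$. Since the $a_i$ are positive integers, $e_1=\sum a_i>0$, so this coefficient never vanishes.

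The argument then concludes by solving in sequence: from $\wt\Lambda_1^4$ we get $e_1$ (hence $e_1\neq0$), from $\wt\Lambda_2^4$ we get $e_2$, from $\wt\Lambda_0^4$ we get $e_4$, and finally from $\wt\Lambda_4^4$ we solve linearly for $e_3$ as
$$
e_3=\frac{6e_1^4-9e_1^2e_2+2e_2^2+e_4-720\,\wt\Lambda_4^4}{e_1}.
$$
All four $e_i$ being determined, the degree sequence is recovered, proving $(4,8)$ is firm.

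The only genuine subtlety—and the step I would be most careful about—is the positivity input $e_1>0$, which is what makes the division by $e_1$ legitimate. Firmness is a statement about actual complete intersections, whose degrees are positive integers, so $e_1\geq 4>0$ and the recovery is valid; over a hypothetical regime allowing $e_1=0$ the invariant $\wt\Lambda_4^4$ would lose all dependence on $e_3$ and the argument would break. Everything else is a routine verification that the printed $\wt\Lambda_i^4$ (computed on Macaulay2) are correct and that the resulting system is triangular with invertible pivots; no further geometric input beyond Propositions \ref{lambda} and \ref{nindep} is needed.
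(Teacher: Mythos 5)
Your proof is correct and follows essentially the same route as the paper: recover $e_4, e_1, e_2$ from $\wt\Lambda_0^4, \wt\Lambda_1^4, \wt\Lambda_2^4$ and then solve the linear (in $e_3$) equation given by $\wt\Lambda_4^4$. You are in fact slightly more careful than the paper's one-line proof, since you make explicit that the pivot coefficient $-e_1$ is nonzero because the degrees are positive integers.
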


\begin{proof}
We already know that $e_4, e_1, e_2$ are determined, and $\wt \Lambda_4^4$ is of degree 1 with respect to $e_3$.
\end{proof}

\subsection{Codimension $=$ 5 case} We first confirm by example that $(5,10)$ is not firm.

\begin{eg}
By Proposition \ref{evenodd2}, it suffices to show that $(5,9)$ is not firm.  Let $X(4,4,15,15,22)$ and $X(3,6,11,20,20)$ be two complete intersections in $\PP^9$.  They have Hilbert polynomial $3300 t^{4}-330000 t^{3}+13952400 t^{2}-285120000 t+2328530380$.  As expected, the same pair of sequences shows that $(5,10)$ is not firm; they both have Hilbert polynomial $660 t^{5}-80850 t^{4}+4486900 t^{3}-135666300 t^{2}+2188295670 t-14860251560$ as complete intersections in $\PP^{10}$.
\end{eg}

\begin{prop}
$(5,11)$ is firm.
\end{prop}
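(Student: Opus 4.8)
The plan is to reduce the problem to the single unknown $e_3$ and then to defeat a quadratic ambiguity using positivity of the degrees. Here $\dim X = d = n-c = 6$, so by Theorem \ref{evenodd} the odd invariants $\wt\Lambda^5_3,\wt\Lambda^5_5$ are redundant, and knowing $P_X(t)$ is equivalent to knowing $\wt\Lambda^5_0,\wt\Lambda^5_1,\wt\Lambda^5_2,\wt\Lambda^5_4,\wt\Lambda^5_6$. Exactly as in the codimension $\le 4$ computations, $\wt\Lambda^5_0=e_5$, $\wt\Lambda^5_1=-\tfrac12 e_1$, $\wt\Lambda^5_2=\tfrac1{12}(2e_1^2-e_2)$ recover $e_5,e_1,e_2$, while $\wt\Lambda^5_4=\tfrac1{720}(6e_1^4-9e_1^2e_2+2e_2^2-e_1e_3+e_4)$ (the same universal degree-$4$ polynomial as $\wt\Lambda^4_4$) is affine in $(e_3,e_4)$ with invertible $e_4$-coefficient. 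Hence $\wt\Lambda^5_4$ determines $e_4-e_1e_3$, and I write $e_4=e_1e_3+K$ with $K$ known. At this point $e_3$ is the only remaining unknown, with $e_4$ slaved to it, and everything rests on $\wt\Lambda^5_6$.

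Next I would compute $\wt\Lambda^5_6$ and substitute $e_4=e_1e_3+K$. It is cleanest to pass to the power sums $p_k:=\sum_i a_i^k$: since $\on{Td}\!\big(1/c(\mathscr N_{X/\PP^n})\big)=\prod_i\frac{1-e^{-a_ih_X}}{a_ih_X}$, writing $\log\frac{1-e^{-x}}{x}=\sum_k\ell_k x^k$ gives $\wt\Lambda^c_i=[t^i]\exp\!\big(\sum_k \ell_k p_k t^k\big)$ with $\ell_k=0$ for odd $k\ge 3$; thus $\wt\Lambda^5_1,\wt\Lambda^5_2,\wt\Lambda^5_4,\wt\Lambda^5_6$ recover precisely $p_1,p_2,p_4,p_6$ (the power-sum shadow of Theorem \ref{evenodd}). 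Running Newton's identities in five variables with $e_1,e_2,e_5,p_4,p_6$ fixed, the equation recorded by $p_6$ becomes
$$p_6 = 3e_3^2 - 6e_1e_2\,e_3 + (\text{known}),$$
a quadratic in $e_3$ with positive leading coefficient whose two roots sum to $2e_1e_2$.

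The main obstacle is exactly that $\wt\Lambda^5_6$ is quadratic, not linear, in $e_3$ (contrast the $(4,8)$ case), so $e_3$ is a priori not determined. I would finish with positivity. Separating the terms with three distinct indices in $e_1e_2=(\sum_i a_i)(\sum_{j<k}a_ja_k)$ yields the identity $e_1e_2=3e_3+M_{(2,1)}$ with $M_{(2,1)}=\sum_{i\ne j}a_i^2a_j>0$ for any genuine degree sequence of five positive integers; hence every complete intersection satisfies $e_3<\tfrac13 e_1e_2$. Two distinct roots of the quadratic sum to $2e_1e_2$, so they cannot both lie below $\tfrac13 e_1e_2$. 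Thus at most one root is realized by a degree sequence, so $e_3$ (and then $e_4$, and the whole sequence) is determined, and $(5,11)$ is firm. The only points needing care are pinning down the two coefficients $3$ and $-6e_1e_2$ of the quadratic, a finite computation cross-checkable against the Macaulay2 output for $\wt\Lambda^5_6$, and the elementary identity $e_1e_2=3e_3+M_{(2,1)}$.
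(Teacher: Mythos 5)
Your proof is correct, and its skeleton is the same as the paper's: recover $e_5, e_1, e_2$ from $\wt\Lambda^5_0, \wt\Lambda^5_1, \wt\Lambda^5_2$; use linearity of $\wt\Lambda^5_4$ in $(e_3,e_4)$ to write $e_4 = e_1e_3+K$; observe that the remaining invariant becomes a quadratic in $e_3$ whose two roots sum to $2e_1e_2$; and eliminate one root by positivity of the degrees. Where you genuinely differ is in how the quadratic is produced. The paper substitutes $e_4=e_1e_3+K$ into the Macaulay2-computed expression for $\wt\Lambda^5_6$, obtaining $e_3^2-2e_1e_2e_3+(\text{known})=60480\,\wt\Lambda^5_6$; you instead pass to power sums, using $\on{Td}\l(1/c(\mathscr N_{X/\PP^n})\r)=\prod_i\frac{1-e^{-a_ih_X}}{a_ih_X}$ and the fact that $\log\frac{1-e^{-x}}{x}=-\frac{x}{2}+\log\frac{\sinh(x/2)}{x/2}$ is even apart from its linear term, so that the data is exactly $e_5,p_1,p_2,p_4,p_6$, and Newton's identities in five variables give $p_6=3e_3^2-6e_1e_2e_3+(\text{known})$ after the substitution. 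I have checked this computation: it agrees with the paper's quadratic up to the factor $3$ (consistently, $1/\ell_6=181440=3\cdot 60480$). Your route buys a hand-checkable derivation independent of the computer algebra output, and it exposes the structural reason behind Theorem \ref{evenodd} for complete intersections; its one implicit obligation is that $\ell_2,\ell_4,\ell_6\neq 0$ (a finite check, amounting to non-vanishing of the relevant Bernoulli numbers), which is needed for $\wt\Lambda^5_2,\wt\Lambda^5_4,\wt\Lambda^5_6$ to actually recover $p_2,p_4,p_6$. Your positivity step is also slightly different but equally valid: you prove the sharper identity $e_1e_2=3e_3+M_{2,1}$, hence $e_3<\frac13 e_1e_2$ for any genuine degree sequence, so two distinct realized roots cannot sum to $2e_1e_2$; the paper uses only $e_3<e_1e_2$ to conclude the true $e_3$ is the smaller root. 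Both inequalities suffice for the conclusion.
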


\begin{proof}
We have that $\wt \Lambda_0^5, \wt \Lambda_1^5, \wt \Lambda_2^5, \wt \Lambda_4^5, \wt \Lambda_6^5$ are
\begin{dmath*}
\wt \Lambda_0^5 = e_5
\end{dmath*}

\begin{dmath*} \wt \Lambda_1^5 = ({e}_{1}) (-1/2)
\end{dmath*}

\begin{dmath*} \wt \Lambda_2^5 = (2 {e}_{1}^{2}-{e}_{2}) (1/12)
\end{dmath*}

\begin{dmath*} \wt \Lambda_4^5 = (6 {e}_{1}^{4}-9 {e}_{1}^{2}
      {e}_{2}+2 {e}_{2}^{2}-{e}_{1} {e}_{3}+{e}_{4}) (1/720)
\end{dmath*}

\begin{dmath*} \wt \Lambda_6^5 = (12 {e}_{1}^{6}-30
      {e}_{1}^{4} {e}_{2}+24 {e}_{1}^{2} {e}_{2}^{2}-12 {e}_{1}^{3} {e}_{3}-3
      {e}_{2}^{3}+3 {e}_{1} {e}_{2} {e}_{3}+12 {e}_{1}^{2} {e}_{4}+{e}_{3}^{2}-5
      {e}_{2} {e}_{4}+2 {e}_{1} {e}_{5}) (1/60480)
\end{dmath*}

As usual, we know that $e_5, e_1, e_2$ can be recovered from $\wt \Lambda^5_0, \wt \Lambda^5_1, \wt \Lambda^5_2$.  We note that $\wt \Lambda_4^5$ is linear in $e_3, e_4$ as $-e_3e_1 + e_4  = 720 \wt \Lambda_4^5 - 6( {e}_{1}^{4}-9 {e}_{1}^{2} {e}_{2}+2 {e}_{2}^{2})$.  As the value of $K :=  720\wt \Lambda_4^5 - 6( {e}_{1}^{4}-9 {e}_{1}^{2} {e}_{2}+2 {e}_{2}^{2})$  is known, we substitute $e_4 = e_3e_1 +K$ into $\wt \Lambda_6^4$ to get a quadric equation in terms of $e_3$:
$$
{e}_{3}^{2}-2 {e}_{1} {e}_{2} {e}_{3} +12 {e}_{1}^{6}-30 {e}_{1}^{4} {e}_{2}+24 {e}_{1}^{2} {e}_{2}^{2}-3{e}_{2}^{3}+12 {e}_{1}^{2} K+2 {e}_{1} {e}_{5}-5 {e}_{2} K = 60480 \wt \Lambda_6^5
$$

This shows that there are potentially two values of $e_3$, but the average of the two values for $e_3$ is $e_1e_2$.  Since $e_1e_2-e_3 = M_{2,1}+3M_{1,1,1} - M_{1,1,1} = M_{2,1}+ 2M_{1,1,1}>0$ for any elementary symmetric functions of positive integers, we take the smaller root, and recover $e_3$.
\end{proof}

\subsection{Codimension $=$ 6 case} We first list $\wt \Lambda_i^6$ for $i = 0,1,2,4,6,8$.

\begin{dmath*}
\wt \Lambda_0^6  = e_6
\end{dmath*}

\begin{dmath*} \wt \Lambda_1^6  = ({e}_{1}) (-1/2)
\end{dmath*}

\begin{dmath*} \wt \Lambda_2^6  = (2 {e}_{1}^{2}-{e}_{2}) (1/12)
\end{dmath*}

\begin{dmath*} \wt \Lambda_4^6  = (6 {e}_{1}^{4}-9 {e}_{1}^{2}
      {e}_{2}+2 {e}_{2}^{2}-{e}_{1} {e}_{3}+{e}_{4}) (1/720)
\end{dmath*}

\begin{dmath*} \wt \Lambda_6^6  = (12 {e}_{1}^{6}-30
      {e}_{1}^{4} {e}_{2}+24 {e}_{1}^{2} {e}_{2}^{2}-12 {e}_{1}^{3} {e}_{3}-3
      {e}_{2}^{3}+3 {e}_{1} {e}_{2} {e}_{3}+12 {e}_{1}^{2} {e}_{4}+{e}_{3}^{2}-5
      {e}_{2} {e}_{4}+2 {e}_{1} {e}_{5}-2 {e}_{6}) (1/60480)
\end{dmath*}

\begin{dmath*} \wt \Lambda_8^6  = (10 {e}_{1}^{8}-35
      {e}_{1}^{6} {e}_{2}+50 {e}_{1}^{4} {e}_{2}^{2}-25 {e}_{1}^{5} {e}_{3}-25
      {e}_{1}^{2} {e}_{2}^{3}+25 {e}_{1}^{3} {e}_{2} {e}_{3}+25 {e}_{1}^{4} {e}_{4}+2
      {e}_{2}^{4}-3 {e}_{1} {e}_{2}^{2} {e}_{3}+9 {e}_{1}^{2} {e}_{3}^{2}-42
      {e}_{1}^{2} {e}_{2} {e}_{4}+17 {e}_{1}^{3} {e}_{5}-2 {e}_{2} {e}_{3}^{2}+7
      {e}_{2}^{2} {e}_{4}-{e}_{1} {e}_{3} {e}_{4}-4 {e}_{1} {e}_{2} {e}_{5}-17
      {e}_{1}^{2} {e}_{6}+2 {e}_{4}^{2}-3 {e}_{3} {e}_{5}+7 {e}_{2} {e}_{6})
      (1/3628800)
\end{dmath*}

\begin{eg}\label{c6eg}
Again by Proposition \ref{evenodd2}, we only need show that $(6,12)$ is not firm to show that $N_6 \geq 14$.  From the first five polynomials listed above, we see that this amounts to finding two different positive integer sequences of length 6 such that $e_6, e_1, e_2, e_1e_3 - e_4, e_3^2 - 2e_2e_4 + 2e_1e_5$ are the same.  A computer search via \verb+iterSearch(6,150,6)+ produces
$$X(46, 36, 32, 15, 12, 5), \quad X(45, 40, 24, 23, 8, 6)$$
which both have Hilbert polynomial:
\begin{dmath*} P_X(t) = 66240 t^{6}-26429760 t^{5}+4792795200 t^{4}-495690148800 t^{3}+30434011089120 t^{2}-1041907113767520 t+15429613604601120
\end{dmath*}
as complete intersections in $\PP^{12}$.  This is the ``smallest" example in the sense that the sum of the degree sequence ($=146$) is the minimum possible.
\end{eg}

\begin{prop}
$(6,14)$ is firm.
\end{prop}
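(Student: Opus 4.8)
The plan is to reduce, exactly as in the codimension $5$ case, to recovering the elementary symmetric polynomials $e_1, \ldots, e_6$ of the degree sequence $(a_1, \ldots, a_6)$ (from which the $a_i$ are recovered as the roots of $\sum_k (-1)^k e_k x^{6-k}$). Here $X(a_1,\ldots,a_6)\subset \PP^{14}$ has dimension $d = 8$, so by Propositions \ref{lambda} and \ref{nindep} the Hilbert polynomial is equivalent to the nine quantities $\wt\Lambda_0^6, \ldots, \wt\Lambda_8^6$; by Theorem \ref{evenodd} the odd-index ones $\wt\Lambda_3^6, \wt\Lambda_5^6, \wt\Lambda_7^6$ are redundant, so the genuine data is $\wt\Lambda_0^6, \wt\Lambda_1^6, \wt\Lambda_2^6, \wt\Lambda_4^6, \wt\Lambda_6^6, \wt\Lambda_8^6$ --- six quantities for six unknowns $e_1, \ldots, e_6$. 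From the listed formulas one reads off $e_6$, $e_1$, and then $e_2$ directly from $\wt\Lambda_0^6, \wt\Lambda_1^6, \wt\Lambda_2^6$, so the task is to recover the triple $(e_3, e_4, e_5)$ from $\wt\Lambda_4^6, \wt\Lambda_6^6, \wt\Lambda_8^6$ with $e_1, e_2, e_6$ treated as known constants.

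Next I would eliminate variables one at a time. Since $\wt\Lambda_4^6$ is affine-linear in $(e_3, e_4)$ with $e_4$-coefficient $1$, it gives $e_4 = e_1 e_3 + K_1$ for a known constant $K_1$. Substituting this into $\wt\Lambda_6^6$, which is quadratic in $e_3$ and linear in $e_5$ (with $e_5$-coefficient $2e_1$), solves for $e_5$ as an explicit quadratic $e_5 = -\tfrac{1}{2e_1}e_3^2 + e_2 e_3 + L$ in $e_3$, again with $L$ known. Feeding both relations into $\wt\Lambda_8^6$ then produces a single cubic equation $g(e_3)=0$. A direct substitution, which one checks by hand or on Macaulay2, produces substantial cancellation: the leading coefficient is $\tfrac{3}{2e_1}$ and the coefficient of $e_3^2$ collapses to $\tfrac{3}{2}(e_1^2 - 2e_2)$, so the three roots of $g$ sum to $-e_1(e_1^2-2e_2) = -e_1\sum_i a_i^2 < 0$. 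In particular $g$ cannot have three positive roots.

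The remaining and main obstacle is to show that $g$ has a \emph{unique} admissible root, i.e. at most one root $e_3$ that can arise as the third elementary symmetric polynomial of a positive sequence with the prescribed $e_1, e_2, e_6$ and the induced $e_4, e_5$. The approach I would take is the positivity argument that succeeded for $c=5$: the requirements $e_4 = e_1 e_3 + K_1 > 0$ and $e_5 = -\tfrac{1}{2e_1}e_3^2 + e_2 e_3 + L > 0$, together with Newton's inequalities for elementary symmetric functions of six positive reals (which bound $e_3$ from above by a multiple of $e_2^2/e_1$), confine the true $e_3$ to an explicit interval $I$ depending only on the known data. On $I$ one then shows $g$ is strictly monotonic, so it attains the value prescribed by $\wt\Lambda_8^6$ at most once. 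I expect the hard part to be precisely this last step: establishing the sign of $g'$ throughout $I$, since the lower-order coefficients of $g$ involve $K_1$ and $L$ and hence depend on the data. As in the $c=5$ argument, I would handle it by rewriting the relevant coefficient combinations in the monomial symmetric polynomials $M_\lambda$, where positivity for positive integer sequences is manifest, thereby isolating the admissible root uniquely and concluding that $(6,14)$ is firm.
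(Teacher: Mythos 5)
Your reduction is exactly the paper's: recover $e_6, e_1, e_2$ from $\widetilde\Lambda_0^6, \widetilde\Lambda_1^6, \widetilde\Lambda_2^6$, eliminate $e_4 = e_1e_3 + K$ using $\widetilde\Lambda_4^6$, eliminate $e_5 = -\tfrac{1}{2e_1}e_3^2 + e_2e_3 + K'$ using $\widetilde\Lambda_6^6$, and land on a cubic equation $F(e_3) = f$ (the paper takes $f = 7257600\,e_1\widetilde\Lambda_8^6$) whose root sum is $2e_1e_2 - e_1^3 = -e_1(e_1^2 - 2e_2) < 0$; all of this agrees with the paper's computation. But at the decisive point your proposal stops being a proof. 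Ruling out three positive roots is not enough: the cubic can perfectly well have two distinct positive roots together with one very negative root, and a priori each positive root could arise from an actual degree sequence, since both sequences would share all the known quantities. Everything hinges on excluding this, and what you offer is a plan (confine $e_3$ to an interval $I$ using $e_4 > 0$, $e_5 > 0$ and Newton's inequalities, then show $g$ is strictly monotonic on $I$) whose crucial step --- the sign of $g'$ on $I$ --- you explicitly defer. Nothing in the proposal shows this can be done; positivity in the monomial symmetric basis is not automatic, it is precisely the computation that must be exhibited.

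For comparison, the paper's finish is different and is carried out in full: it compares the true $e_3$ with the larger root $r_2$ of $F'(t) = 0$. Because $2e_1e_2 - e_1^3 < 0$, the sign of $r_2 - e_3$ reduces to the sign of $(e_1e_2-e_3)e_3 - e_1(e_1e_4-e_5)$, and expanding in monomial symmetric polynomials gives
$$(e_1e_2 -e_3)e_3 - e_1(e_1e_4 - e_5) = 16M_{1,1,1,1,1,1}+8M_{2,1,1,1,1} + 4M_{2,2,1,1}+2M_{2,2,2}+M_{3,2,1} > 0,$$
a computation with genuine cancellation (the $M_{3,1,1,1}$ terms cancel and the term $4M_1M_{1,1,1,1,1}$ must be absorbed). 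Hence every admissible value of $e_3$ lies strictly below $r_2$. If two distinct admissible values existed, they would be the two roots of $F = f$ below $r_2$, both positive, forcing the largest root (which is at least the middle one) to be positive as well, and hence the root sum to be positive --- contradicting the negativity that you, like the paper, established. This critical-point comparison, reduced to an explicit inequality verified in the $M_\lambda$ basis, is the missing idea; your interval-plus-monotonicity scheme might conceivably be made to work, but as written it leaves the heart of the proof unproved.
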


\begin{proof}
As before, $e_6, e_1, e_2$ are recovered from $\wt \Lambda_0^6, \wt \Lambda_1^6, \wt \Lambda_2^6$ as usual.  Just as in the $c=5$ case, $\wt \Lambda_4^6$ is linear in $e_3, e_4$ so that we again make the substitution $e_4 = e_3e_1 +K$ into $\wt \Lambda_6^6$ and $\wt \Lambda_8^6$ to obtain

\begin{dmath*}
\wt \Lambda_6^6 = (12 {e}_{1}^{6}-30 {e}_{1}^{4} {e}_{2}+24 {e}_{1}^{2} {e}_{2}^{2}-3
      {e}_{2}^{3}-2 {e}_{1} {e}_{2} {e}_{3}+12 {e}_{1}^{2} K+{e}_{3}^{2}+2 {e}_{1}
      {e}_{5}-5 {e}_{2} K) (1/60480)
\end{dmath*}

\begin{dmath*}
\wt \Lambda_8^6 = (10 {e}_{1}^{8}-35 {e}_{1}^{6} {e}_{2}+50
      {e}_{1}^{4} {e}_{2}^{2}-25 {e}_{1}^{2} {e}_{2}^{3}-17 {e}_{1}^{3} {e}_{2}
      {e}_{3}+25 {e}_{1}^{4} K+2 {e}_{2}^{4}+4 {e}_{1} {e}_{2}^{2} {e}_{3}+10
      {e}_{1}^{2} {e}_{3}^{2}+17 {e}_{1}^{3} {e}_{5}-42 {e}_{1}^{2} {e}_{2} K-2
      {e}_{2} {e}_{3}^{2}-4 {e}_{1} {e}_{2} {e}_{5}+7 {e}_{2}^{2} K+3 {e}_{1} {e}_{3}
      K-3 {e}_{3} {e}_{5}+2 K^{2}) (1/3628800)\end{dmath*}

Noting that the first equation is linear in $e_5$, we can make another substitution $e_5 = -\frac{e_3^2}{2e_1} + e_2e_3 + K'$ into the second equation (where $K'$ is a known quantity).  We then obtain

\begin{dmath*}
7257600e_1\wt \Lambda_8^6 = 3 {e}_{3}^{3}+(3 {e}_{1}^{3}-6 {e}_{1} {e}_{2}) {e}_{3}^{2}+(6 {e}_{1}^{2} K-6
      {e}_{1} K') {e}_{3}+20 {e}_{1}^{9}-70 {e}_{1}^{7} {e}_{2}+100
      {e}_{1}^{5} {e}_{2}^{2}-50 {e}_{1}^{3} {e}_{2}^{3}+50 {e}_{1}^{5} K+4 {e}_{1}
      {e}_{2}^{4}-84 {e}_{1}^{3} {e}_{2} K+34 {e}_{1}^{4} K'+14 {e}_{1}
      {e}_{2}^{2} K-8 {e}_{1}^{2} {e}_{2} K'+4 {e}_{1} K^{2}
\end{dmath*}

Call the left-hand-side of above $f:= 7257600e_1\wt \Lambda_8^6$, and view the right-hand-side as a cubic form $F$ in $e_3$.  We wish to pick out which root of $F(e_3) = f$ is the true value of $e_3$.  We first note that $2e_1e_2 - e_1^3 = 2(M_{2,1}+ 3M_{1,1,1}) - M_3 - 3M_{2,1} - 6M_{1,1,1} = -M_3 - M_{2,1}<0$.  Thus, we have that the average of the three roots of $F=f$ is negative.  Hence, we are done once we know the sign of the larger root of $F'(t) = 0$ minus $e_3$.  Well, we have $\frac13 F'(t) = 3t^2 + 2(e_1^3 - 2e_1e_2)t + 2e_1^2K - 2e_1K'$.  Thus, the larger root of $F'(t) = 0$ minus $e_3$ is:
$$\frac{-3e_3 + (2e_1e_2 - e_1^3) + \rt{(2e_1e_2 - e_1^3)^2 - 3(2e_1^2K - 2e_1K')}}{3}$$
Since $-3e_3+ (2e_1e_2 - e_1^3)<0$, the sign of the above is determined by the sign of
$$-(-3e_3+ (2e_1e_2 - e_1^3))^2 +\big((2e_1e_2 - e_1^3)^2 - 3(2e_1^2K - 2e_1K')\big)$$
Using $K = -e_1e_3 + e_4$ and $K' = \frac{e_3^2}{2e_1} - e_2e_3 + e_5$, the above simplifies to the following (after division by 6):
$$-{e}_{3}^{2}+{e}_{1} {e}_{2} {e}_{3}-{e}_{1}^{2} {e}_{4}+{e}_{1} {e}_{5} = (e_1e_2 -e_3)e_3 - e_1(e_1e_4 - e_5).$$
We claim that the above quantity is always positive as a symmetric function of $(a_1, \ldots, a_6)$ when $a_i$'s are positive real numbers.  To see this, we convert the above expression into monomial symmetric polynomials:
$$\begin{array}{rl}
(e_1e_2 -e_3)e_3 - e_1(e_1e_4 - e_5) =& M_{1,1,1}(M_{2,1}+3M_{1,1,1} -M_{1,1,1})\\ &- M_1(M_{2,1,1,1}+5M_{1,1,1,1,1} - M_{1,1,1,1,1})\\[2mm]
=& 2M_{1,1,1}^2 + M_{2,1}M_{1,1,1} - M_1M_{2,1,1,1} - 4M_1M_{1,1,1,1,1}\\[2mm]
=&2{6 \choose 3}M_{1,1,1,1,1,1} + 2{4\choose 2}M_{2,1,1,1,1} + 2{2 \choose 1}M_{2,2,1,1} + 2M_{2,2,2}\\
& +4M_{2,1,1,1,1} + 2M_{2,2,1,1} + M_{3,1,1,1} + M_{3,2,1}\\
& -(4M_{2,1,1,1,1} + 2M_{2,2,1,1} + M_{3,1,1,1})\\
& - 4(6M_{1,1,1,1,1,1}+M_{2,1,1,1,1})\\[2mm]
=& 16M_{1,1,1,1,1,1}+8M_{2,1,1,1,1} + 4M_{2,2,1,1}+2M_{2,2,2}+M_{3,2,1}>0
\end{array}
$$
as we note that
$$M_{1,1,1}^2 = {6 \choose 3}M_{1,1,1,1,1,1} + {4\choose 2}M_{2,1,1,1,1} + {2 \choose 1}M_{2,2,1,1} + M_{2,2,2}$$
$$M_{2,1}M_{1,1,1} = 4M_{2,1,1,1,1} + 2M_{2,2,1,1} + M_{3,1,1,1} + M_{3,2,1}$$
$$M_1M_{2,1,1,1} = 4M_{2,1,1,1,1} + 2M_{2,2,1,1} + M_{3,1,1,1}$$

\end{proof}

\subsection{Codimension $ = 7$ case} We conclude with some remarks about determining $N_7$.  Taking the same approach taken in $c=6$ case, we compute $\widetilde\Lambda^7_i$ for $i = 0,1,2,4,6,8,10$ and make some substitutions.  One then obtains the following to equations of $e_3, e_6$, with coefficients in $e_1, e_2, e_7, K, K'$---all known quantities.

\begin{dmath*}
7257600e_1\widetilde\Lambda_8^7 = 3{e}_{3}^{3}+(3{e}_{1}^{3}-6{e}_{1}{e}_{2}){e}_{3}^{2}+(42{e}_{1}^{2}-15{e}_{2}-6){e}_{3}{e}_{6}+4{e}_{1}{e}_{6}^{2}-6{e}_{1}K'{e}_{3}+(-154{e}_{1}^{5}+49{e}_{1}^{3}{e}_{2}-6{e}_{1}{e}_{2}^{2}+6{e}_{1}{e}_{2}){e}_{6}+20{e}_{1}^{9}-70{e}_{1}^{7}{e}_{2}+100{e}_{1}^{5}{e}_{2}^{2}-50{e}_{1}^{3}{e}_{2}^{3}+4{e}_{1}{e}_{2}^{4}+34{e}_{1}^{4}K'-8{e}_{1}^{2}{e}_{2}K'-6{e}_{1}^{2}{e}_{7}
\end{dmath*}

\begin{dmath*}
1916006400e_1^2 \widetilde\Lambda_{10}^7 = 5 {e}_{3}^{4}+(112 {e}_{1}^{3}-46 {e}_{1} {e}_{2}) {e}_{3}^{3}+(120
      {e}_{1}^{2}-50 {e}_{2}-20) {e}_{3}^{2} {e}_{6}+(112 {e}_{1}^{6}-250
      {e}_{1}^{4} {e}_{2}+72 {e}_{1}^{2} {e}_{2}^{2}-20 {e}_{1} {K'})
      {e}_{3}^{2}+(1568 {e}_{1}^{5}-1164 {e}_{1}^{3} {e}_{2}-264 {e}_{1}^{3}+230
      {e}_{1} {e}_{2}^{2}+92 {e}_{1} {e}_{2}) {e}_{3} {e}_{6}+(820 {e}_{1}^{4}-620
      {e}_{1}^{2} {e}_{2}-280 {e}_{1}^{2}+125 {e}_{2}^{2}+100 {e}_{2}+20)
      {e}_{6}^{2}+(-224 {e}_{1}^{4} {K'}+92 {e}_{1}^{2} {e}_{2} {K'}+40
      {e}_{1}^{2} {e}_{7}) {e}_{3}+(-1600 {e}_{1}^{8}+1788 {e}_{1}^{6} {e}_{2}-414
      {e}_{1}^{4} {e}_{2}^{2}+224 {e}_{1}^{4} {e}_{2}+32 {e}_{1}^{2} {e}_{2}^{3}-52
      {e}_{1}^{2} {e}_{2}^{2}-232 {e}_{1}^{3} {K'}+100 {e}_{1} {e}_{2}
      {K'}+40 {e}_{1} {K'}) {e}_{6}+48 {e}_{1}^{12}-216 {e}_{1}^{10}
      {e}_{2}+448 {e}_{1}^{8} {e}_{2}^{2}-420 {e}_{1}^{6} {e}_{2}^{3}+144
      {e}_{1}^{4} {e}_{2}^{4}+304 {e}_{1}^{7} {K'}-8 {e}_{1}^{2}
      {e}_{2}^{5}-292 {e}_{1}^{5} {e}_{2} {K'}-224 {e}_{1}^{5} {e}_{7}+32
      {e}_{1}^{3} {e}_{2}^{2} {K'}+52 {e}_{1}^{3} {e}_{2} {e}_{7}+20
      {e}_{1}^{2} {K'}^{2}
\end{dmath*}

The symmetry of solutions of polynomials is harder to exploit here than the codimension 6 case.  Computer trials evaluating $e_1, e_2, e_7, K, K'$ to integers suggest that the degree of the ideal generated by the two resulting polynomials in $e_3,e_6$ is 5.

\bigskip
\subsection*{Acknowledgements} We would like to thank Justin Chen for suggesting the problem and providing help with Macaulay2 codes, Mengyuan Zhang for helpful discussions and code contributions, David Eisenbud for various helpful conversations, Burt Totaro for pointing to virtual Todd genera, and lastly an anonymous reference for helpful comments.

\bigskip

\end{document}